\newcommand{\C} {{\mathbb C}}                              
\newcommand{\R} {{\mathbb R}}                              
\newcommand{\Z} {{\mathbb Z}}                              
\newcommand{\F} {{\mathbb F}}
\newcommand {\bv} {\boldsymbol}
\newtheorem{thm}{Theorem}[section]
\newtheorem{prop}[thm]{Proposition}
\newtheorem{lem}[thm]{Lemma}
\begin{document}

\title[On Character Sums and Exponential Sums over GAPs]{On Character Sums and Exponential Sums over Generalized Arithmetic Progressions}
\author{Xuancheng Shao}
\address{Department of Mathematics \\ Stanford University \\
450 Serra Mall, Bldg. 380\\ Stanford, CA 94305-2125}
\email{xshao@math.stanford.edu}

\maketitle

\begin{abstract}
Let $\chi$ (mod $q$) be a primitive Dirichlet character. In this paper, we prove a uniform upper bound of the character sum $\sum_{a\in A}\chi(a)$ over all proper generalized arithmetic progressions $A\subset\Z_q$ of rank $r$:
\[ \sum_{n\in A}\chi(n)\ll_{r}q^{\frac{1}{2}}(\log q)^r. \]
This generalizes the classical result by P\'{o}lya and Vinogradov. Our method also applies to give a uniform upper bound for the polynomial exponential sum $\sum_{n\in A}e_q(h(n))$ ($q$ prime), where $h(x)\in\Z[x]$ is a polynomial of degree $2\leq d<q$.
\end{abstract}

\section{Introduction}

Let $q$ be a positive integer and write $\Z_q=\Z/q\Z$. In this paper we study upper bounds on the quantity
\[ S(f,r)=\max_{A}\left|\sum_{n\in A}f(n)\right|, \]
where $f:\Z_q\rightarrow\C$ is an arbitrary function, and the maximum is taken over all  proper generalized arithmetic progressions (GAP) of rank $r$ in $\Z_q$. Recall that a GAP of rank $r$ in $\Z_q$ is of the form
\[ A=\{a_0+a_1h_1+\ldots+a_rh_r\pmod q:0\leq h_1<H_1,\ldots,0\leq h_r<H_r\}, \]
for some $a_0,a_1,\ldots,a_r\in\Z_q$ and positive integers $2\leq H_1,\ldots,H_r\leq q$. We say that $A$ is proper if $|A|=H_1H_2\cdots H_r$. 

Our approach to bound $S(f,r)$ is via the classical completion method using Fourier analysis. We study two different types of functions $f$: $f=\chi$, a primitive Dirichlet character modulo $q$, and $f(n)=e_q(h(n))$ for some polynomial $h(x)\in\Z[x]$ (where, as usual, $e_q(x)=\exp(2\pi ix/q)$).

\subsection{Character sum over GAPs}

We first consider $f=\chi$, a primitive Dirichlet character modulo $q$. In the simplest case when $r=1$, $S(\chi,1)$ is the maximum of character sums over intervals. The first upper bound for $S(\chi,1)$, discovered independently by P\'{o}lya and Vinogradov in 1918, asserts that
\begin{equation}\label{eq:pv} S(\chi,1)\ll q^{\frac{1}{2}}\log q, \end{equation}
uniformly over all primitive Dirichlet characters $\chi$ (mod $q$). This bound has been recently improved by Granville and Soundararajan \cite{granville2007}, and later by Goldmakher \cite{goldmakher2009}, for characters of (small) fixed odd order $g$. They improved the exponent of $\log q$ from $1$ to $1-\delta(g)$ for some $\delta(g)>0$. 

The main theorem of this paper is a generalization of the Polya-Vinogradov type bound for $r>1$:

\begin{thm}\label{main_thm}
Let $\chi$ (mod $q$) be a primitive Dirichlet character. Then for any $r\geq 1$,
\begin{equation}\label{main_eq_epsilon} S(\chi,r)\ll q^{\frac{1}{2}}(\log q)^r, \end{equation}
where the implied constant depends only on $r$.
\end{thm}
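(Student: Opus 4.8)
The plan is to run the classical completion argument, reduce to an arithmetic bound for a sum of products of Dirichlet kernels, and then prove that bound by a dyadic decomposition combined with a lattice‑point count from the geometry of numbers. Write $\|x\|$ for the distance from $x\in\R$ to the nearest integer, and for a positive integer $H$ and $t\in\Z_q$ set $D_H(t)=\sum_{h=0}^{H-1}e_q(ht)$, so that $|D_H(t)|\le\min\!\big(H,(2\|t/q\|)^{-1}\big)$. First I would complete the character sum: with $\tau(\psi)=\sum_a\psi(a)e_q(a)$ the Gauss sum, primitivity of $\chi$ gives $|\tau(\bar\chi)|=q^{1/2}$ and $\sum_n\bar\chi(n)e_q(-bn)=\chi(-b)\tau(\bar\chi)$ for all $b$, so Parseval applied to $1_A$ and $\bar\chi$ yields
\[
\Big|\sum_{n\in A}\chi(n)\Big|=\frac{|\tau(\bar\chi)|}{q}\,\Big|\sum_{b\in\Z_q}\widehat{1_A}(b)\,\bar\chi(-b)\Big|\ \le\ q^{-1/2}\sum_{b\in\Z_q}\big|\widehat{1_A}(b)\big|,
\]
where $\widehat f(b)=\sum_n f(n)e_q(-bn)$. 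Here the hypothesis that $A$ is proper enters for the first time: with $P_j=\{0,a_j,\dots,(H_j-1)a_j\}$, properness is exactly the statement that $1_A=1_{\{a_0\}}*1_{P_1}*\cdots*1_{P_r}$ (each $n$ has at most one representation $a_0+\sum_j a_jh_j=n$ with $0\le h_j<H_j$), whence $\big|\widehat{1_A}(b)\big|=\prod_j\big|D_{H_j}(a_jb)\big|\le\prod_j\min(H_j,\|a_jb/q\|^{-1})$. Thus the theorem reduces to
\begin{equation}\label{eq:red}
\sum_{b\in\Z_q}\ \prod_{j=1}^r\min\!\Big(H_j,\frac{1}{\|a_jb/q\|}\Big)\ \ll_r\ q(\log q)^r .
\end{equation}

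Next I would dyadically decompose each factor. From $\min(H,1/x)=\int_0^H 1_{\{x<1/s\}}\,ds$ and splitting each $s$‑integral over the ranges $[2^k,2^{k+1})$, the left side of \eqref{eq:red} is
\[
\ll_r\ \sum_{\delta_1,\dots,\delta_r}\Big(\prod_{j=1}^r\delta_j^{-1}\Big)\,\big|B(\delta_1,\dots,\delta_r)\big|,\qquad B(\delta_1,\dots,\delta_r)=\Big\{b\in\Z_q:\ \|a_jb/q\|\le\delta_j\ \ (1\le j\le r)\Big\},
\]
where each $\delta_j$ ranges over the $O(\log q)$ powers $2^{-k}$ with $1/H_j\le 2^{-k}\le 1$. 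Since there are $\ll_r(\log q)^r$ such tuples, it suffices to prove the uniform estimate
\begin{equation}\label{eq:bohr}
\big|B(\delta_1,\dots,\delta_r)\big|\ \ll_r\ q\,\delta_1\cdots\delta_r
\qquad\text{whenever }\ \delta_j\ge 1/H_j\ \text{ for all }j .
\end{equation}

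The estimate \eqref{eq:bohr} is the heart of the matter, and proving it is where I expect the real difficulty to lie; I would attack it through the geometry of numbers. The homomorphism $\Z_q\to(\R/\Z)^r$ sending $b\mapsto(a_1b/q,\dots,a_rb/q)$ has image a finite subgroup whose full preimage in $\R^r$ is a lattice $\Gamma\supseteq\Z^r$ with $[\Gamma:\Z^r]=q/g$, $g=\gcd(a_1,\dots,a_r,q)$; so $\det\Gamma=g/q$, the kernel of the homomorphism has order $g$, and --- since $\delta_j\le\tfrac12$ makes the box $K=\prod_j[-\delta_j,\delta_j]$ inject into $(\R/\Z)^r$ --- one has $|B(\delta_1,\dots,\delta_r)|=g\cdot|\Gamma\cap K|$. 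The dual lattice is $\Gamma^{*}=\{n\in\Z^r:\ a_1n_1+\cdots+a_rn_r\equiv 0\ (\mathrm{mod}\ q)\}$, and properness of $A$ says precisely that $\Gamma^{*}$ contains no nonzero point of the open box $\prod_j(-H_j,H_j)$. Because the polar body $K^{\circ}=\{y\in\R^r:\sum_j\delta_j|y_j|\le 1\}$ is contained in the box $\prod_j[-\delta_j^{-1},\delta_j^{-1}]$ and $\delta_j^{-1}\le H_j$, it lies inside $\prod_j[-H_j,H_j]$, so every successive minimum of $\Gamma^{*}$ with respect to $K^{\circ}$ is $\ge 1$; by the transference theorem for successive minima, every successive minimum $\lambda_i$ of $\Gamma$ with respect to $K$ is then $\ll_r 1$. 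Hence the standard lattice‑point bound gives $|\Gamma\cap K|\ll_r\prod_i(1+\lambda_i^{-1})\ll_r\prod_i\lambda_i^{-1}$, and Minkowski's second theorem $\prod_i\lambda_i\gg_r\det\Gamma/\operatorname{vol}(K)$ turns this into $|\Gamma\cap K|\ll_r\operatorname{vol}(K)/\det\Gamma=2^r(\delta_1\cdots\delta_r)\,q/g$, which is \eqref{eq:bohr}.

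Finally, feeding \eqref{eq:bohr} back into the dyadic sum,
\[
\sum_{b\in\Z_q}\prod_{j=1}^r\min\!\Big(H_j,\frac{1}{\|a_jb/q\|}\Big)\ \ll_r\ \sum_{\delta_1,\dots,\delta_r}\Big(\prod_j\delta_j^{-1}\Big)\,q\,(\delta_1\cdots\delta_r)\ \ll_r\ q(\log q)^r ,
\]
which together with the completion step yields \eqref{main_eq_epsilon}. The completion and dyadic steps are routine once the convolution identity for proper GAPs is in hand; the one genuinely substantial point is \eqref{eq:bohr}, i.e.\ converting properness --- the absence of short vectors in $\Gamma^{*}$ --- into a lattice‑point count for $\Gamma$, and there I would take care with the case $g>1$ and with boundary effects when some $\delta_jq$ is small, both of which are absorbed by the argument above.
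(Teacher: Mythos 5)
Your proposal is correct, and its overall skeleton coincides with the paper's: complete the sum using the Gauss-sum expansion of a primitive $\chi$ (this is Lemma \ref{lem:gen_fr} plus $\|\hat{\chi}\|_\infty=\sqrt{q}$), use properness to factor $\widehat{1_A}$ as a product of Dirichlet kernels, and reduce everything to the bound $\sum_b\prod_j\min(H_j,\|a_jb/q\|^{-1})\ll q(\log q)^r$, which is Theorem \ref{thm:l1norm}. The genuine divergence is in how you prove the cell-counting estimate that drives that bound. Your Bohr-set inequality $|\{b:\|a_jb/q\|\le\delta_j\ \forall j\}|\ll_r q\,\delta_1\cdots\delta_r$ (for $\delta_j\ge 1/H_j$) is exactly Proposition \ref{prop:num_solns} specialized to the intervals $I_j=[-\delta_jq,\delta_jq]$; the paper proves it by a smoothing/exponential-sum argument in which properness enters as the statement that each box of side lengths $H_1,\dots,H_r$ contains at most one solution of $c_1a_1+\cdots+c_ra_r\equiv 0$, whereas you prove it by geometry of numbers, reading properness as the absence of nonzero points of the dual lattice $\Gamma^*$ in the open box $\prod_j(-H_j,H_j)$ and then invoking the transference theorem, the standard lattice-point count $|\Gamma\cap K|\ll_r\prod_i(1+\lambda_i^{-1})$, and Minkowski's second theorem. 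Your route is conceptually cleaner and is precisely the Konyagin--Bourgain--Chang philosophy the introduction alludes to, at the cost of importing the (nontrivial) transference theorem and of some boundary bookkeeping (the $\delta_j\in(1/2,1]$ shells, points on $\partial K^\circ$ when $\delta_j=1/H_j$, and the $K$-injects-into-the-torus step), all of which you correctly flag and all of which only cost $O_r(1)$ factors. The paper's Fourier argument is more elementary and self-contained, and is stated directly in $\Z_q^s$ so that it also feeds the multilinear Theorem \ref{main_gen_thm}; your lattice argument as written is for $s=1$ and would need the lattice set-up redone to cover that case. For Theorem \ref{main_thm} itself, both arguments deliver the same bound.
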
 

The properness of $A$ in $\Z_q$ is essential here. In fact, Theorem \ref{main_thm} is false if the assumption that $A\subset\Z_q$ is a proper GAP is lifted to $\Z$. For example, if $\chi$ is even and $A=\{h_1+h_2(q-1):1\leq h_1,h_2\leq H\}$ for $H=q/2$, then $A$ is a proper GAP in $\Z$, and
\[ \sum_{a\in A}\chi(a)=\sum_{h_1,h_2=1}^H\chi(h_1-h_2)=2\sum_{1\leq n\leq H}\chi(n)(H-n). \]
By a standard averaging argument, one can show that this sum above can be made as large as $H\sqrt{q}$ by appropriately choosing $\chi$. The extra $H$ factor represents the quantity $\max_{b\in\Z_q}\#\{a\in A:a\equiv b\pmod q\}$, which is $1$ if $A$ is assumed to be proper in $\Z_q$. Our results can be easily generalized to take this into account; however, we will simply focus on the case when $A$ is proper in $\Z_q$.

The proof of Theorem \ref{main_thm} is based on Fourier analysis. As is well known by now, the proof of (\ref{eq:pv}) is based on bounding the $\ell^1$ norm of the Fourier coefficients of an interval $A$ (see, for example, Chapter 23 in \cite{davenport2000}). We apply the same method to treat Theorem \ref{main_thm}. As a result, we will need a bound for the $\ell^1$ norm of the Fourier coefficients of a generalized arithmetic progression, which will be established in Section \ref{sec:l1norm}. 

For comparison, we state here a recent result by Chang \cite{chang2008}, which gives a Burgess type bound for character sums over short GAPs.

\begin{thm}[(Chang)]\label{chang}
Let $p$ be prime. Let $\chi$ (mod $p$) be a nontrivial Dirichlet character. Let $A\subset\F_p$ be a proper GAP of rank $r$ with
\[ |A|>p^{\frac{2}{5}+\epsilon} \]
for some $\epsilon>0$. Then we have
\[ \left|\sum_{a\in A}\chi(a)\right|<p^{-\tau}|A|, \]
where $\tau=\tau(\epsilon,r)>0$ and assuming $p>p(\epsilon,r)$.
\end{thm}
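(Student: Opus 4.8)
The plan is to run Burgess's amplification method, with the interval of the classical argument replaced by the GAP $A$ itself and its additive structure supplying the shifts. Write $S=\sum_{a\in A}\chi(a)$ and $N=|A|$. The feature that makes this harder than the interval case is that a GAP is not invariant under translation by small integers; instead I would exploit that a \emph{proper} GAP is approximately invariant under translation by a short sub-GAP. Concretely, let $B=\{\sum_i t_ia_i:0\le t_i<T_i\}$ use the same generators $a_i$ as $A$ but with shorter side lengths $T_i\le\eta H_i$. Properness forces the translates $A+s$ ($s\in B$) to agree with $A$ off a set of size $\ll_r\eta N$, and the Pl\"unnecke--Ruzsa inequalities keep all relevant sumsets ($A+B$, and $A+cB$ for $c$ in a short interval) comparable to $N$. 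Choosing a short interval $[1,C]$ of multipliers so that $cs$ still lies in a dilate of $B$ preserving $A$, this yields the starting identity
\[ CK\,S=\sum_{c=1}^{C}\sum_{s\in B}\sum_{x\in A}\chi(x+cs)+O_r(\eta N\cdot CK), \]
where $K=|B|$.

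Next I would introduce the multiplicative amplification. By complete multiplicativity $\chi(x+cs)=\chi(c)\chi(\bar cx+s)$, so after setting $m\equiv\bar cx\pmod p$ the triple sum equals $\sum_m\big(\sum_{\bar cx\equiv m}\chi(c)\big)V(m)$, which is at most $\sum_m\mathcal N(m)|V(m)|$, where $V(m)=\sum_{s\in B}\chi(m+s)$ is an incomplete character sum over the box $B$ and $\mathcal N(m)=\#\{(c,x)\in[1,C]\times A:\bar cx\equiv m\}$ counts representations of $m$. H\"older's inequality in the form $\sum_m\mathcal N(m)|V(m)|\le(\sum_m\mathcal N(m))^{1-1/k}(\sum_m\mathcal N(m)|V(m)|^k)^{1/k}$ followed by Cauchy--Schwarz reduces everything to two quantities: the complete $2k$-th moment $\sum_{m\bmod p}|V(m)|^{2k}$ and the multiplicative energy $E=\sum_m\mathcal N(m)^2=\#\{(c,c',x,x')\in[1,C]^2\times A^2:cx'=c'x\}$. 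The moment I would bound by Weil's estimate (the Riemann Hypothesis for curves); since $B$ is a box, summing one generator at a time reduces each factor to a one-dimensional incomplete sum, giving the expected $\ll_{k,r}K^kp+K^{2k}p^{1/2}$.

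The main obstacle is the energy $E$. Controlling it is exactly a sum--product statement: $A$ is additively structured (doubling $\le 2^r$), and one must show it cannot simultaneously be multiplicatively structured, so that $\bar cx$ spreads out. Via $E\le E_\times([1,C])^{1/2}E_\times(A)^{1/2}$ and the divisor bound $E_\times([1,C])\ll C^2\log C$ for the multiplicative energy of an interval, this comes down to bounding the multiplicative energy $E_\times(A)$ of the GAP, equivalently to proving $|A\cdot A|\gg N^{2-o(1)}$ for a proper GAP. For $r=1$ this is the classical fact that products of an arithmetic progression spread out; for general $r$ I would combine the Pl\"unnecke--Ruzsa machinery with a sum--product/incidence estimate of Szemer\'edi--Trotter type over $\F_p$ (which also governs the $C^2N^2/p$ wraparound term), to obtain $E\ll_r(CN)^{1+o(1)}$ in the range of interest. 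The quality of this energy bound is precisely what the threshold responds to, and is the reason the exponent is $2/5$ rather than Burgess's $1/4$.

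Finally I would insert the two estimates into the H\"older bound, divide by the number of shifts $CK$, and balance the edge error $\ll_r\eta N\cdot CK$ against the main term by optimizing the side lengths $T_i$ of $B$, the multiplier range $C$, and the integer $k$. Under the hypothesis $N>p^{2/5+\epsilon}$ this optimization leaves a genuine saving $|S|<p^{-\tau}N$ with $\tau=\tau(\epsilon,r)>0$ once $p>p(\epsilon,r)$, which is the claim. I expect all steps except the energy estimate to be routine bookkeeping; the sum--product input, together with tracking its dependence on the rank $r$ and on the size of $A$ relative to $p$, is where the real difficulty lies.
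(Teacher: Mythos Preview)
The paper does not contain a proof of this theorem. It is stated purely for comparison: the sentence preceding it reads ``For comparison, we state here a recent result by Chang \cite{chang2008},'' and no argument is given anywhere in the paper. So there is no ``paper's own proof'' to compare your proposal against.

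That said, your sketch is in the spirit of Chang's original argument: Burgess amplification with the GAP's own additive structure furnishing the shifts, Weil for the high moment, and a sum--product input to control the multiplicative energy of a set with small doubling. Where you are vaguest is exactly where the content lies. You write that you would get $E_\times(A)$ small ``by combining Pl\"unnecke--Ruzsa with a Szemer\'edi--Trotter type incidence estimate over $\F_p$,'' but this is the entire theorem: the specific sum--product estimate one invokes, and the range of $|A|$ in which it is effective, are what fix the exponent $2/5$. Chang's paper does not simply quote a black-box energy bound; a substantial part of the work is a tailored estimate for the number of solutions to $x_1x_2\equiv x_3x_4$ with the $x_i$ in a GAP. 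Your outline would not become a proof until that step is made precise, and the optimization you allude to at the end cannot be carried out without it.
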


\subsection{Polynomial Exponential sum over GAPs}

In this subsection, we shall restrict our attention to prime $q$. Our method also allows us to establish a bound for polynomial exponential sums over a proper GAP.

\begin{thm}\label{thm:exp}
Let $q$ be prime. Let $h(x)=c_dx^d+\ldots+c_1x+c_0\in\Z[x]$ be a polynomial of degree $2\leq d<q$ with $q\nmid c_d$. Let $f:\Z_q\rightarrow\C$ be defined by $f(n)=e_q(h(n))$. Let $A\subset\Z_q$ be a proper GAP of rank $r$. Then,
\[ S(f,r)\ll dq^{\frac{1}{2}}(\log q)^r, \]
where the implied constant depends only on $r$.
\end{thm}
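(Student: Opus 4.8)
The plan is to follow the same completion-method skeleton used for Theorem~\ref{main_thm}, replacing the multiplicative-character input by the Weil bound for polynomial exponential sums. Write the indicator of $A$ in terms of its additive Fourier expansion on $\Z_q$: since $A$ is proper of rank $r$, we have $|A| = H_1\cdots H_r$, and
\[
\sum_{n\in A} f(n) = \sum_{n\in A} e_q(h(n)) = \frac{1}{q}\sum_{t\in\Z_q}\widehat{1_A}(t)\,\overline{\widehat{f}(t)}\,,
\]
where $\widehat{g}(t)=\sum_{n\in\Z_q} g(n) e_q(-tn)$ and $\widehat{f}(t)=\sum_{n\in\Z_q} e_q(h(n)-tn)$ is a complete polynomial exponential sum of degree $d$. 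The key analytic inputs are: (i) the Weil bound $|\widehat{f}(t)| \le (d-1)q^{1/2}$ for every $t$, valid since $q$ is prime, $2\le d<q$, and $q\nmid c_d$ (so the polynomial $h(x)-tx$ still has degree $d$ and leading coefficient not divisible by $q$); and (ii) the $\ell^1$ bound on the Fourier coefficients of a proper GAP established in Section~\ref{sec:l1norm}, which gives $\sum_{t\in\Z_q}|\widehat{1_A}(t)| \ll_r q(\log q)^r$.

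Combining these, I bound
\[
\left|\sum_{n\in A} e_q(h(n))\right| \le \frac{1}{q}\left(\sup_{t}|\widehat{f}(t)|\right)\sum_{t\in\Z_q}|\widehat{1_A}(t)| \ll_r \frac{1}{q}\cdot (d-1)q^{1/2}\cdot q(\log q)^r \ll_r d\, q^{1/2}(\log q)^r,
\]
which is exactly the claimed bound. Since the right-hand side is independent of the particular proper GAP $A$ of rank $r$, taking the maximum over all such $A$ yields $S(f,r)\ll_r d\,q^{1/2}(\log q)^r$.

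The only real subtlety is making sure the completion identity and the $\ell^1$ estimate are applied in a way that is genuinely uniform over $A$ — in particular, that the $\ell^1$ bound from Section~\ref{sec:l1norm} depends only on the rank $r$ and not on the side lengths $H_i$ or the generators $a_i$, which is precisely what that section is designed to provide. The Weil bound handles the arithmetic side cleanly because we do not need cancellation in $t$; we only use the pointwise estimate, so there is no issue with the $t$ for which $h(x)-tx$ might degenerate (it cannot, since the degree-$d$ coefficient is untouched). Thus the proof is essentially a transcription of the argument for Theorem~\ref{main_thm}, with the Pólya–Vinogradov-type bound on Gauss-sum-like quantities replaced by Weil's estimate; the main work has already been done in establishing the $\ell^1$ bound for GAP Fourier coefficients.
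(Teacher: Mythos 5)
Your proposal is correct and follows essentially the same route as the paper: complete the sum via Plancherel (the paper's Lemma \ref{lem:gen_fr}), bound the complete sums $\sum_n e_q(h(n)-tn)$ pointwise by Weil's estimate (valid for all $t$ since the leading coefficient of degree $d\geq 2$ is unchanged), and invoke the $\ell^1$ bound on $\widehat{1_A}$ from Theorem \ref{thm:l1norm}. No substantive differences.
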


The quantity $S(f,1)$ has been studied intensively. The classical Weil's bound for complete sums says that
\[ S(f,1)\leq (d-1)q^{\frac{1}{2}}\log q. \]
In fact, our proof of Theorem \ref{thm:exp} will use Weil's bound. It's natural to ask if one can get a nontrivial bound for $d>\sqrt{q}$. This is possible for certain (sparse) polynomials $h$, but we will not consider this case here. For more details on complete polynomial exponential sums, see \cite{bourgain2010exp} and the references therein. 

\subsection{Multilinear character sum over GAPs}

We also prove the following generalization to multilinear character sums. 

\begin{thm}\label{main_gen_thm}
Let $\chi$ (mod $q$) be a primitive Dirichlet character. Let $\bv a_0,\bv a_1,\cdots,\bv a_r\in\Z_q^s$, and let $A\subset\Z_q^s$ be the set
\[ A=\{\bv a_0+h_1\bv a_1+\cdots+h_r\bv a_r\pmod q:0\leq h_i<H_i\}. \]
Suppose that $|A|=H_1H_2\cdots H_r$. Then,
\begin{equation}\label{main_gen_eq_epsilon} \sum_{\bv a=(a_1,\ldots,a_s)\in A}\chi(a_1a_2\cdots a_s)\ll q^{\frac{s}{2}}(\log q)^r, \end{equation}
where the implied constant depends only on $r,s$.
\end{thm}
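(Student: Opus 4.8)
The plan is to run the completion argument behind Theorem~\ref{main_thm} in all $s$ coordinates simultaneously, and then to reduce the resulting $s$-dimensional exponential sum to the $\ell^1$-norm bound established in Section~\ref{sec:l1norm}. Since $\chi$ is primitive modulo $q$ we have $\chi(n)=\tau(\bar\chi)^{-1}\sum_{b\bmod q}\bar\chi(b)e_q(bn)$ for every $n\in\Z_q$, with $|\tau(\bar\chi)|=q^{1/2}$. Writing $\chi(a_1\cdots a_s)=\prod_{j=1}^{s}\chi(a_j)$, applying this identity to each factor, and using $|\bar\chi(b)|\le 1$, we obtain
\[
\Bigl|\sum_{\bv a\in A}\chi(a_1\cdots a_s)\Bigr|\le q^{-s/2}\sum_{\bv b\in\Z_q^{s}}\Bigl|\sum_{\bv a\in A}e_q(\bv b\cdot\bv a)\Bigr|,
\]
where $\bv b\cdot\bv a$ denotes the standard pairing on $\Z_q^{s}$. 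Because $A$ is proper, the parametrisation $(h_1,\dots,h_r)\mapsto\bv a_0+\sum_{i}h_i\bv a_i$ is a bijection onto $A$, so the inner sum factorises as
\[
\sum_{\bv a\in A}e_q(\bv b\cdot\bv a)=e_q(\bv b\cdot\bv a_0)\prod_{i=1}^{r}G_{H_i}(\bv b\cdot\bv a_i),\qquad G_H(\beta):=\sum_{h=0}^{H-1}e_q(h\beta);
\]
the factor $e_q(\bv b\cdot\bv a_0)$ is unimodular, so it suffices to prove $\sum_{\bv b\in\Z_q^{s}}\prod_{i=1}^{r}|G_{H_i}(\bv b\cdot\bv a_i)|\ll_{r}q^{s}(\log q)^{r}$.

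The crux is that this last sum depends on $\bv b$ only through the group homomorphism $\Phi\colon\Z_q^{s}\to\Z_q^{r}$, $\bv b\mapsto(\bv b\cdot\bv a_1,\dots,\bv b\cdot\bv a_r)$. Writing $W=\Phi(\Z_q^{s})\le\Z_q^{r}$ for the image, each nonempty fibre of $\Phi$ is a coset of $\ker\Phi$ and hence has exactly $q^{s}/|W|$ elements, so
\[
\sum_{\bv b\in\Z_q^{s}}\prod_{i=1}^{r}|G_{H_i}(\bv b\cdot\bv a_i)|=\frac{q^{s}}{|W|}\sum_{\bv w\in W}\prod_{i=1}^{r}|G_{H_i}(w_i)|.
\]
It remains to see that properness is exactly what makes the $\ell^1$-norm bound applicable to $W$. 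Its annihilator $W^{\perp}=\{\bv e\in\Z_q^{r}:\bv e\cdot\bv w=0\text{ for all }\bv w\in W\}$ is easily identified with $\{\bv e\in\Z_q^{r}:\sum_i e_i\bv a_i=\bv 0\}$; and if $W^{\perp}$ contained a nonzero $\bv e$ having a representative with $|e_i|<H_i$ for every $i$, then writing each $e_i$ as a difference of two integers in $\{0,\dots,H_i-1\}$ would produce two distinct tuples $(h_1,\dots,h_r)$ with the same image in $A$, contradicting $|A|=H_1\cdots H_r$. Thus $W^{\perp}$ meets the box $\prod_i(-H_i,H_i)$ only in $\bv 0$, which is precisely the hypothesis under which Section~\ref{sec:l1norm} bounds the $\ell^{1}$ norm of the Fourier transform. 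Invoking that bound in the form valid for an arbitrary subgroup $W\le\Z_q^{r}$ — its proof uses only the standard pointwise decay of the geometric sums $G_H$ together with a count of points of $W$ in boxes, so the cyclic case $s=1$ carries no special weight — gives $\sum_{\bv w\in W}\prod_i|G_{H_i}(w_i)|\ll_r|W|(\log q)^{r}$, and combining this with the two previous displays yields $|\sum_{\bv a\in A}\chi(a_1\cdots a_s)|\ll_r q^{s/2}(\log q)^{r}$, as claimed.

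The one step beyond the proof of Theorem~\ref{main_thm} is thus the bookkeeping of the second paragraph: pushing the sum from $\Z_q^{s}$ down to the subgroup $W\le\Z_q^{r}$ and translating properness into the statement that $W^{\perp}$ avoids a box. The only point I expect to demand real care is the case in which some $H_i$ is comparable to $q$, so that the box $\prod_i(-H_i,H_i)$ wraps around $\Z_q^{r}$; there one argues with the difference-of-residues description of properness directly, as above, rather than with the box itself. No fresh analytic input is needed — the estimate for $\sum_{\bv w\in W}\prod_i|G_{H_i}(w_i)|$ is identical to the one underlying the P\'{o}lya--Vinogradov bound for rank-$r$ GAPs in $\Z_q$.
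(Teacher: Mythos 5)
Your proposal is correct and follows essentially the same route as the paper: completion of $\chi$ via the Gauss sum (equivalent to the Plancherel step with $\|\hat f\|_{\infty}=q^{s/2}$), followed by the $\ell^1$ bound $\sum_{\bv b\in\Z_q^s}\prod_i|G_{H_i}(\bv b\cdot\bv a_i)|\ll q^s(\log q)^r$, with properness entering exactly as the statement that $\sum_i z_i\bv a_i\equiv\bv 0$ has no nontrivial small solutions. Your repackaging of that last bound through the image subgroup $W=\Phi(\Z_q^s)\le\Z_q^r$ and its annihilator is only a cosmetic reorganization of the paper's Proposition~\ref{prop:num_solns_gen} and Theorem~\ref{thm:l1norm_gen}, which work directly on $\Z_q^s$.
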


To treat Theorem \ref{main_gen_thm}, it is necessary to bound the $\ell^1$ norm of Fourier coefficients of GAPs in $\Z_q^s$, which will be done in Section \ref{sec:l1norm}. For comparison, we state the following result by Bourgain and Chang \cite{bourgain2010}, which is a Burgess type bound for multilinear character sums.

\begin{thm}[(Bourgain and Chang)]\label{bourgain_chang}
Let $p$ be prime. Let $\chi$ (mod $p$) be a nontrivial Dirichlet character. Let $L_1,L_2,\ldots,L_n$ be linearly independent linear forms in $n$ variables $x_1,x_2,\ldots,x_n$. Let $I_i=[N_i+1,N_i+H]$ ($1\leq i\leq n$) be intervals. Let
\[ A=\{(L_1(x_1,\ldots,x_n),\ldots,L_n(x_1,\ldots,x_n)):x_i\in I_i\} \]
be a subset of $\F_p^n$. Assume that
\[ H>p^{\frac{1}{4}+\epsilon}. \]
Then,
\[ \sum_{(a_1,\ldots,a_n)\in A}\chi(a_1a_2\cdots a_n)\ll p^{-\delta}|A| \]
for some $\delta=\delta(n,\epsilon)>0$ and $p$ large enough.
\end{thm}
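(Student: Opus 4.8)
The plan is to run Burgess's amplification method in the multilinear setting, using the linear independence of the $L_j$ to factorize the resulting high moment over the $n$ coordinates into single-variable complete sums controllable by Weil's bound. Since $\Phi(\mathbf{x})=(L_1(\mathbf{x}),\dots,L_n(\mathbf{x}))$ is a linear bijection of $\F_p^n$, we have $|A|=H^n$, and after translating the domain box to $Q=[1,H]^n$ (which only alters the constant terms of the arguments, harmlessly) the target sum is $S=\sum_{\mathbf{x}\in Q}\prod_{j=1}^n\chi(L_j(\mathbf{x}))$. We may assume $H\leq p^{1/2}$, since for larger $H$ a completion bound already suffices. For a scalar amplifier $a$ coprime to $p$ and a short vector $\mathbf{b}\in[1,R]^n$, I would shift $\mathbf{x}\mapsto\mathbf{x}+a\mathbf{b}$; linearity gives $L_j(\mathbf{x}+a\mathbf{b})=L_j(\mathbf{x})+aL_j(\mathbf{b})$, and multiplicativity yields the key identity $\prod_j\chi(L_j(\mathbf{x}+a\mathbf{b}))=\chi(a)^n\prod_j\chi(a^{-1}L_j(\mathbf{x})+L_j(\mathbf{b}))$.

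First I would let $a$ range over the primes $\mathcal{P}$ in $(P/2,P]$ and $\mathbf{b}$ over $[1,R]^n$, subject to $PR\leq H^{1-\epsilon}$, so that passing from $Q$ to $Q-a\mathbf{b}$ incurs only a negligible boundary error $\ll|\mathcal{P}|R^n\cdot nH^{n-1}PR$. Averaging the shifted identity gives $|\mathcal{P}|R^n|S|\lesssim\sum_{a,\mathbf{b}}|\sum_{\mathbf{u}:\,a\mathbf{u}\in A}\prod_j\chi(u_j+L_j(\mathbf{b}))|+\text{error}$, where $\mathbf{u}=a^{-1}\Phi(\mathbf{x})$ now ranges freely over $\F_p^n$. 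I would introduce the weight $\nu(\mathbf{u})=\#\{a\in\mathcal{P}:a\mathbf{u}\in A\}$, so that $\sum_{\mathbf{u}}\nu(\mathbf{u})=|\mathcal{P}|H^n$ and $\sum_{\mathbf{u}}\nu(\mathbf{u})^2=\#\{(a_1,a_2,\mathbf{x}_1,\mathbf{x}_2)\in\mathcal{P}^2\times Q^2:a_2\mathbf{x}_1\equiv a_1\mathbf{x}_2\}\ll|\mathcal{P}|H^n(PH)^{o(1)}$ by a standard divisor estimate (here $PH<p$ makes the congruence an integer equation, solved coordinatewise). A Hölder inequality of exponent $2k$ then separates this weight from the moment $M_{2k}=\sum_{\mathbf{u}\in\F_p^n}|\sum_{\mathbf{b}\in[1,R]^n}\prod_j\chi(u_j+L_j(\mathbf{b}))|^{2k}$.

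The crucial point is that, since $\mathbf{u}$ is now free, expanding $M_{2k}$ and summing over $\mathbf{u}$ first factorizes it over coordinates: $M_{2k}=\sum_{\mathbf{b}_1,\dots,\mathbf{b}_k,\mathbf{b}'_1,\dots,\mathbf{b}'_k}\prod_{j=1}^n V_j$, where each $V_j=\sum_{u\in\F_p}\prod_i\chi(u+L_j(\mathbf{b}_i))\overline{\chi}(u+L_j(\mathbf{b}'_i))$ is a single-variable complete sum. Weil's bound gives $|V_j|\ll k\sqrt{p}$ unless the multiset pair $\{L_j(\mathbf{b}_i)\}_i,\{L_j(\mathbf{b}'_i)\}_i$ is matched in the sense dictated by $\operatorname{ord}\chi$, in which case $V_j=p+O(\sqrt{p})$. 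Hence $M_{2k}\ll p^n D+k\,p^{n-1/2}R^{2kn}$, where $D$ counts the configurations matched in every coordinate simultaneously. Inserting the bounds for $\sum\nu$, $\sum\nu^2$, and $M_{2k}$ and optimizing $P,R,k$ with $k\sim 1/\epsilon$ should reproduce the single-variable Burgess exponents in each coordinate, giving $|S|\ll_{n,k}H^{n(1-1/k)}p^{n(k+1)/(4k^2)+o(1)}$, which is $\ll p^{-\delta}|A|$ precisely when $H>p^{1/4+\epsilon}$.

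I expect the count $D$ to be the main obstacle. The fully aligned solutions, where a single permutation $\sigma$ satisfies $\mathbf{b}_i=\mathbf{b}'_{\sigma(i)}$, number only $\ll_k R^{kn}$, and it is exactly the linear independence of the $L_j$ that forces $\mathbf{b}_i=\mathbf{b}'_{\sigma(i)}$ from the $n$ scalar identities $L_j(\mathbf{b}_i)=L_j(\mathbf{b}'_{\sigma(i)})$. The difficulty is that $D$ also includes solutions using a different permutation $\sigma_j$ in each coordinate, for which no single vector identity holds; ruling these out as rare, i.e. $D\ll_{n,k}R^{kn+o(1)}$, requires a genuinely combinatorial (additive) argument and is where the real content lies. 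I anticipate this coincidence count, together with the bookkeeping of the exceptional Weil configurations when $\chi$ has small order, to be the technical heart of the proof, whereas the amplification and completion steps are direct multilinear analogues of the classical one-dimensional argument.
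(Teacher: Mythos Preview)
The paper does not prove this statement. Theorem~\ref{bourgain_chang} is quoted from Bourgain and Chang \cite{bourgain2010} solely as a point of comparison with the paper's own Theorem~\ref{main_gen_thm}; no proof, sketch, or outline is given anywhere in the text. There is therefore no ``paper's own proof'' against which to evaluate your proposal.

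As a side remark on your sketch itself: the paper does offer one relevant hint. In the paragraph following the statement it notes that the threshold $H>p^{1/4+\epsilon}$ in the Bourgain--Chang result is reached ``due to the use of results from the geometry of numbers \dots\ (first observed by Konyagin \cite{konyagin2010}), which leads to better bounds than from purely additive combinatorial arguments.'' Your outline is a purely additive-combinatorial Burgess amplification, and you correctly flag the coincidence count $D$ (matched configurations with possibly different permutations $\sigma_j$ in each coordinate) as the unresolved heart of the matter. That count is exactly where the geometry-of-numbers input enters in the original argument; without it, a straightforward combinatorial bound on $D$ is unlikely to recover the full $p^{1/4+\epsilon}$ range. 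So while the overall architecture of your sketch---shift-and-average, H\"older with weight $\nu$, factorization of the $2k$th moment over coordinates via Weil---is the right skeleton, the step you label ``the technical heart'' is not merely bookkeeping but requires an idea you have not supplied.
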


Note that the constraint $H>p^{1/4+\epsilon}$ in Theorem \ref{bourgain_chang} is weaker than the constraint $|A|>p^{2/5+\epsilon}$ in Theorem \ref{chang}. This is due to the use of results from the geometry of numbers in Theorem \ref{bourgain_chang} (first observed by Konyagin \cite{konyagin2010}), which leads to better bounds than from purely additive combinatorial arguments. 

\vspace{5 mm}
The structure of the paper is as follows. In Section \ref{sec:cong}, we establish a lemma needed on bounding the number of solutions to a system of linear congruences, which might be of indpendent interest. In Section \ref{sec:l1norm}, we use this lemma to bound the $\ell^1$ norm of Fourier coefficients of generalized arithmetic progressions. Finally, in Section \ref{sec:gen_fr}, we complete the proofs of Theorem \ref{main_thm}, \ref{thm:exp}, and \ref{main_gen_thm}, using Fourier analysis. The method used in this section is standard, and we include it for the sake of completeness. In Section \ref{sec:conj}, we pose some related open questions.

\vspace{5 mm}
\textit{Acknowledgement: } The author would like to thank his advisor Kannan Soundararajan for carefully reading the first draft of this paper and making many valuable suggestions. He is also grateful for the anonymous referee for proposing the smoothing technique used in the proof of Proposition \ref{prop:num_solns_gen}, along with numerous other helpful comments.


\section{Solutions to Linear Congruences in Short Intervals}\label{sec:cong}

In this section, we establish some lemmas that will be used later. The following proposition, which may be of independent interest, gives an upper bound for the number of solutions to a system of linear congruence equations in short intervals. In the special case when $r=1$, this is to be compared with Lemma 5 in \cite{bourgain2012}.

\begin{prop}\label{prop:num_solns}
Let $q$ be a positive integer. Let $I_1,\ldots,I_r$ be intervals, and $H_1,\ldots,H_r\leq q$ be positive integers. Given nonzero elements $a_1,\ldots,a_r\in\Z_q$ such that the equation
\[ a_1z_1+\cdots+a_rz_r\equiv 0\pmod q \]
has no nontrivial solutions $z_i\in\Z$ with $|z_i|<H_i$ ($1\leq i\leq r$). Let $N$ be the number of solutions of
\[ x_i\equiv a_iy\pmod q,\quad 1\leq i\leq r, \]
in $x_i\in I_i$ and $y\in\Z_q$. Then
\[ N\ll\frac{1}{q^{r-1}}\prod_{i=1}^r\left(|I_i|+\frac{q}{H_i}\right), \]
where the implied constant depends only on $r$.
\end{prop}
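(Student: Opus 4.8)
The plan is to bound $N$ by a Fourier-analytic (completion) argument. Writing the congruence conditions $x_i \equiv a_i y$ as additive characters modulo $q$, and detecting membership in $I_i$ by the indicator function, one has
\[ N = \sum_{y \in \Z_q} \prod_{i=1}^r \mathbf{1}_{I_i}(a_i y) = \frac{1}{q^r} \sum_{y \in \Z_q} \sum_{t_1,\ldots,t_r \in \Z_q} \prod_{i=1}^r \widehat{\mathbf{1}_{I_i}}(t_i) e_q(a_i y t_i), \]
where $\widehat{\mathbf{1}_{I_i}}(t) = \sum_{x \in I_i} e_q(xt)$. Carrying out the $y$-summation forces $\sum_i a_i t_i \equiv 0 \pmod q$, so
\[ N = \frac{1}{q^{r-1}} \sum_{\substack{t_1,\ldots,t_r \in \Z_q \\ a_1 t_1 + \cdots + a_r t_r \equiv 0}} \prod_{i=1}^r \widehat{\mathbf{1}_{I_i}}(t_i). \]
The first term, from $t_1 = \cdots = t_r = 0$, contributes $q^{-(r-1)} \prod_i |I_i|$, which is within the claimed bound. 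It remains to control the off-diagonal terms.

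Next I would use the standard bound $|\widehat{\mathbf{1}_{I_i}}(t)| \ll \min(|I_i|, \|t/q\|^{-1})$, where $\|\cdot\|$ denotes distance to the nearest integer, and represent each $t_i$ by a residue in $(-q/2, q/2]$. The crucial input is the hypothesis: since $a_1 z_1 + \cdots + a_r z_r \equiv 0$ has no nontrivial solution with $|z_i| < H_i$, any solution tuple $(t_1,\ldots,t_r)$ of the congruence $\sum a_i t_i \equiv 0$ that is not identically zero must have $|t_j| \geq H_j$ for \emph{at least one} index $j$ — in fact, more care is needed: the nonvanishing of small solutions means that on each such tuple, not all coordinates can be small, so we can always extract one coordinate $j$ with $|t_j| \geq H_j$, and for that coordinate $|\widehat{\mathbf{1}_{I_j}}(t_j)| \ll q/|t_j| \ll q/H_j$. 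I would then split the sum over off-diagonal tuples according to which coordinate $j$ realizes $|t_j| \geq H_j$ (a union bound over $r$ cases), bound the $j$-th factor by $q/H_j$, and for the remaining $r-1$ coordinates sum $\sum_{t_i} \min(|I_i|, q/|t_i|) \ll |I_i| + q \log q$ — but here one must be careful, as the naive bound $\sum_{|t| \le q/2}\min(|I|,q/|t|) \ll |I| + q\log(2q/|I|)$ carries an unwanted logarithm.

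The main obstacle is exactly this extra logarithmic factor: a crude application of the divisor-type bound $\sum_t \min(|I_i|, q/|t_i|)$ loses a factor of $\log q$ per coordinate, whereas the proposition claims the clean shape $\prod_i(|I_i| + q/H_i)$ with no logarithms. To remove it, I would exploit the structure of the constraint $\sum_i a_i t_i \equiv 0$ more carefully rather than treating the $t_i$ as free: once $r-1$ of the $t_i$ are fixed, the last one is determined modulo $q$ (using that some $a_i$ is invertible on the relevant part — or, in the general modulus case, determined up to a bounded ambiguity), so the genuinely free variables number only $r-1$, and one of those must be large. Alternatively — and this is presumably the referee's "smoothing technique" alluded to for the companion Proposition \ref{prop:num_solns_gen} — I would replace the sharp indicator $\mathbf{1}_{I_i}$ by a smoothed majorant whose Fourier transform decays like $\min(|I_i|, (q/|t_i|) \cdot (|I_i| H_i / q)^{?})$ or, more simply, a Fejér-type kernel supported on a slightly enlarged interval with $\widehat{\mathbf{1}}(t) \ll \min(|I_i|, q^2/(|I_i| t^2))$, making $\sum_{|t| \ge H_i} q^2/(|I_i| t^2) \ll q^2/(|I_i| H_i)$ convergent without a logarithm; choosing the smoothing scale comparable to $H_i$ then yields precisely the factor $|I_i| + q/H_i$. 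I would carry out the smoothed version: majorize $\mathbf{1}_{I_i}$ by $\psi_i$ with $\operatorname{supp} \widehat{\psi_i}$ controlled and $\widehat{\psi_i}$ rapidly decaying past scale $H_i$, run the same completion identity, use the hypothesis to force one large frequency, and sum the rapidly-decaying tails to close the bound.
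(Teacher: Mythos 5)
Your analytic framework is the right one, and you have correctly located both the obstacle (completion with sharp cutoffs loses a logarithm per coordinate) and its remedy: majorizing $\mathbf{1}_{I_i}$ by a Fej\'er-type kernel whose Fourier transform decays like $\min\{|I_i|^2,\|t/q\|^{-2}\}/|I_i|$. This is exactly the smoothing the paper uses (implemented by writing $x_i=w_i+\ell_i-\ell_i'$ with $0\le\ell_i,\ell_i'\le |I_i|$, which contributes the squared Dirichlet kernel). However, there is a genuine gap in how you convert the arithmetic hypothesis into a count. You extract from it only the consequence that every nonzero solution of $\sum_i a_it_i\equiv 0\pmod q$ has $|t_j|\ge H_j$ for \emph{some} $j$, and then run a union bound over $j$, summing freely over the remaining coordinates. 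This is strictly weaker than what is needed. Take $r=2$, $q$ prime, $|I_1|=|I_2|=L=q/H$, $H_1=H_2=H$: the claimed bound is $\asymp q/H^2$, but your case ``$|t_2|\ge H$'' gives at best $\frac1q\cdot\bigl(\sum_{t_1}\widehat{\psi_1}(t_1)\bigr)\cdot\min\{L,\;q^2/(LH^2)\}\asymp \frac1q\cdot q\cdot \frac qH=\frac qH$, losing a factor of $H$ (the loss worsens for larger $r$). The reason is structural, not just a lossy estimate: the dominant contribution comes from solution tuples with every $t_i$ in its main lobe $|t_i|\lesssim q/|I_i|$, where there are $\asymp q/L=H$ admissible values of $t_1$, and knowing merely that the corresponding $t_2$'s satisfy $|t_2|\ge H$ cannot prevent them from clustering; your first alternative (``$r-1$ free variables, one of which must be large'') is both false as stated (the large coordinate may be the determined one) and still a union bound with the same defect.

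The missing idea is a pigeonhole in frequency space. Partition the $(t_1,\dots,t_r)$-space into boxes $\prod_i[j_iH_i,(j_i+1)H_i)$. If two distinct solutions of $\sum_i a_it_i\equiv0\pmod q$ lay in the same box, their difference would be a nontrivial solution $(z_i)$ with $|z_i|<H_i$ for every $i$, contradicting the hypothesis; hence \emph{each box contains at most one solution}. Summing the smoothed weight over boxes then factors as $\prod_i\bigl(\frac1{|I_i|}\sum_{0\le j_i<q/H_i}\min\{|I_i|^2,(q/(j_iH_i))^2\}\bigr)$, and each factor is $\ll |I_i|+q/H_i$ after splitting according to whether $|I_i|H_i\ge q$ or not. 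Note that this uses the full strength of the hypothesis --- the absence of relations in the entire box $\prod_i(-H_i,H_i)$, not merely the separation of the solution set from the origin --- and it is precisely what produces the product structure $\prod_i(|I_i|+q/H_i)$. With this step inserted, your smoothed completion argument coincides with the paper's proof (carried out there for the more general Proposition \ref{prop:num_solns_gen}).
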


Instead of proving Proposition \ref{prop:num_solns}, we state and prove a more general version, which will be used to deal with multilinear character sums.

\begin{prop}\label{prop:num_solns_gen}
Let $q$ be a positive integer. Let $I_1,\ldots,I_r$ be intervals, and $H_1,\ldots,H_r\leq q$ be positive integers. Given nonzero vectors $\bv a_1,\ldots,\bv a_r\in\Z_q^s$ such that the equation
\[ z_1\bv a_1+\ldots+z_r\bv a_r\equiv 0 \pmod q \]
has no nontrivial solutions $z_i\in\Z$ with $|z_i|<H_i$ ($1\leq i\leq r$). Let $N$ be the number of solutions of
\[ x_i\equiv \bv a_i\bv \cdot \bv y\quad \pmod q,\quad 1\leq i\leq r, \]
in $x_i\in I_i$ and $\bv y\in\Z_q^s$. Then,
\[ N\ll q^{s-r}\prod_{i=1}^r\left(|I_i|+\frac{q}{H_i}\right). \]
\end{prop}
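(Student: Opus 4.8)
The plan is to set up the counting problem via Fourier analysis on $\Z_q^s$, using a smooth majorant for the intervals $I_i$ so that the resulting exponential sums decay rapidly. Write $N = \sum_{\bv y \in \Z_q^s} \prod_{i=1}^r \mathbf{1}_{I_i}(\bv a_i \cdot \bv y \bmod q)$. The first step is to replace each indicator $\mathbf{1}_{I_i}$ by a smooth nonnegative function $\varphi_i$ on $\Z_q$ that dominates it, chosen so that $\widehat{\varphi_i}(t)$ is supported essentially on $|t| \lesssim q/|I_i|$ (or decays like $(|I_i| \cdot |t|/q)^{-2}$ once $|t| \gg q/|I_i|$), for instance a suitably scaled Fej\'er-type kernel; this is the smoothing technique credited to the referee in the acknowledgement. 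Then $N \le \sum_{\bv y} \prod_i \varphi_i(\bv a_i \cdot \bv y)$.

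The second step is to expand each $\varphi_i$ into its Fourier series, $\varphi_i(x) = \sum_{t_i \in \Z_q} \widehat{\varphi_i}(t_i) e_q(t_i x)$, and carry out the sum over $\bv y \in \Z_q^s$. That sum is $\sum_{\bv y} e_q\bigl((t_1 \bv a_1 + \cdots + t_r \bv a_r)\cdot \bv y\bigr) = q^s \mathbf{1}\bigl[t_1\bv a_1 + \cdots + t_r \bv a_r \equiv \bv 0 \pmod q\bigr]$. Hence
\[ N \le q^s \sum_{\substack{t_1,\ldots,t_r \in \Z_q \\ t_1\bv a_1 + \cdots + t_r\bv a_r \equiv \bv 0}} \prod_{i=1}^r |\widehat{\varphi_i}(t_i)|. \]
The third step is the crucial one: invoke the hypothesis. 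If we restrict the $t_i$ to the box $|t_i| < H_i$, the linear relation forces all $t_i = 0$, contributing the single term $\prod_i \widehat{\varphi_i}(0) \asymp \prod_i |I_i|$, and this already has size $\asymp q^s \prod_i |I_i|$, which is within the claimed bound $q^{s-r}\prod(|I_i| + q/H_i)$ since $|I_i| \le |I_i| + q/H_i$ and one checks the powers of $q$ match after accounting for the $q^{-r}$ hidden in $\prod(|I_i|+q/H_i)/q^r$ — wait, here the bound to match is $q^{s-r}\prod(|I_i|+q/H_i)$, and the main term $q^s \prod|I_i|$ must be compared against it, so in fact we expect $|I_i| \lesssim q/H_i$ is \emph{not} assumed; rather the genuine content is bounding the contribution of the remaining terms, where at least one $|t_i| \ge H_i$.

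The fourth step handles those remaining terms. For each such term the rapid decay of $\widehat{\varphi_i}$ gives $|\widehat{\varphi_i}(t_i)| \lesssim |I_i| (|I_i| |t_i|/q)^{-2} = q^2/(|I_i| |t_i|^2)$ when $|t_i| \gtrsim q/|I_i|$, and we must sum $\prod_i |\widehat{\varphi_i}(t_i)|$ over all nonzero solutions of $t_1\bv a_1 + \cdots + t_r\bv a_r \equiv \bv 0$. Here I would dyadically decompose: group the $t_i$ according to which dyadic range $|t_i| \sim 2^{k_i}$ they lie in (with $2^{k_i} \ge H_i$ for at least one index), bound the number of solutions in such a box by $O(q^{s} \cdot 2^{k_1} \cdots 2^{k_r} / q^r + \text{lower order})$ — essentially the number of lattice points in the box divided by $q^r$, valid once the box is large enough, plus a boundary term — and multiply by the decay factors $\prod_i \min(|I_i|, q^2/(|I_i| 2^{2k_i}))$, then sum the resulting geometric series over the $k_i$. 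The main obstacle is precisely this last estimate: counting solutions of $t_1\bv a_1 + \cdots + t_r\bv a_r \equiv \bv 0 \pmod q$ with $t_i$ in prescribed dyadic boxes, uniformly in the $\bv a_i$, and doing the bookkeeping so that the product of (count) $\times$ (Fourier decay) sums to $O\bigl(q^{s-r}\prod_i(|I_i| + q/H_i)\bigr)$ without losing factors of $\log q$. One clean way to organize the count is to note that the congruence $t_1 \bv a_1 + \cdots + t_r\bv a_r \equiv \bv 0$ means $\bv y \mapsto (\bv a_1 \cdot \bv y, \ldots, \bv a_r\cdot \bv y)$-type duality, or more directly to peel off one variable at a time, solving for $t_r$ in terms of $t_1,\ldots,t_{r-1}$ modulo the gcd-type quantities arising from $\bv a_r$, and inducting on $r$; the base case $r=1$ is immediate since then the hypothesis says $\bv a_1 \not\equiv \bv 0$ forces $t_1 = 0$. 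I expect the induction to go through with the error terms $q/H_i$ appearing exactly as the ``width of the box the hypothesis fails to exclude,'' which is the heuristic meaning of the statement.
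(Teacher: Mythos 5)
Your overall architecture matches the paper's: smooth the indicators of the $I_i$ (the paper uses the Fej\'er-type count of representations $x_i=w_i+\ell_i-\ell_i'$, equivalent to your majorant), expand in Fourier series, sum over $\bv y$ to reduce everything to a weighted count of solutions of the dual congruence $c_1\bv a_1+\cdots+c_r\bv a_r\equiv\bv 0\pmod q$ with weights $\prod_i\min\{L_i^2,\|c_i/q\|^{-2}\}$ (up to normalization, your $\prod_i|\widehat{\varphi_i}(t_i)|$). Up to that point the proposal is sound, modulo some confusion about the normalization of $\widehat{\varphi_i}(0)$.

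The genuine gap is in your fourth step, which is exactly where the hypothesis has to do its work and where you leave the argument as a heuristic. Your proposed count of solutions in a dyadic box, ``$\prod_i 2^{k_i}$ divided by a power of $q$, plus lower order,'' is false in general: the solution set of $c_1\bv a_1+\cdots+c_r\bv a_r\equiv\bv 0$ is a subgroup of $\Z_q^r$ whose density depends entirely on the arithmetic of the $\bv a_i$ (it can be as large as $q^{r-1}$ points even for $s$ large, e.g.\ if all $\bv a_i$ are multiples of a single vector), so no equidistribution statement uniform in the $\bv a_i$ is available from size considerations alone. The only uniform control is the hypothesis itself, and you never invoke it for the off-diagonal terms; your remark that for $r=1$ the hypothesis ``forces $t_1=0$'' is also incorrect, since it only excludes nontrivial solutions with $|t_1|<H_1$, not all of them. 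The paper's resolution is a one-line pigeonhole that replaces your entire dyadic/inductive scheme: partition $\Z_q^r$ into the $\prod_i\lceil q/H_i\rceil$ boxes $\prod_i[j_iH_i,(j_i+1)H_i)$; if two solutions lay in the same box their difference would be a nontrivial solution with $|z_i|<H_i$, contradicting the hypothesis, so each box contains at most one solution, with weight at most $\prod_i\min\{L_i^2,\|j_iH_i/q\|^{-2}\}$. Summing over the $j_i$ then gives $\prod_i(L_i^2+qL_i/H_i)$ directly. Your proposed induction on $r$ with gcd bookkeeping for composite $q$ and vector-valued $\bv a_r$ would be substantially harder to carry out and is not needed; without the pigeonhole observation (or an equivalent), the proof is incomplete at its central step.
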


Our argument will use repeatedly the following simple estimate on exponential sums over an interval.

\begin{lem}\label{lem:simple_exp_sum}
Let $q,H$ be positive integers and $c\in\Z$. Then,
\[ \sum_{h=0}^{H-1}e_q(ch)\ll\min\left\{H,\left\|\frac{c}{q}\right\|^{-1}\right\}, \]
where $\|x\|$ denotes the distance from $x$ to its nearest integer.
\end{lem}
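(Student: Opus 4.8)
The plan is to recognize the sum as a finite geometric series, bound it by the standard closed form, and then convert the resulting denominator into the distance-to-nearest-integer quantity via an elementary inequality for the sine function; combining this with the trivial bound yields the minimum.

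First I would dispose of the degenerate case $q\mid c$. Here $e_q(ch)=1$ for every $h$, so the sum equals $H$ exactly, while $\|c/q\|=0$ forces $\|c/q\|^{-1}=+\infty$; thus the minimum on the right-hand side is $H$ and the inequality holds with constant $1$.

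Next, assuming $q\nmid c$ so that $e_q(c)\neq 1$, I would sum the geometric progression to obtain $\sum_{h=0}^{H-1}e_q(ch)=(e_q(cH)-1)/(e_q(c)-1)$ and bound the numerator trivially by $2$. For the denominator, $|e_q(c)-1|=|e^{2\pi i c/q}-1|=2|\sin(\pi c/q)|=2\sin(\pi\|c/q\|)$, where the final identity uses that $|\sin(\pi t)|$ depends only on $t\bmod 1$ and equals $\sin(\pi\|t\|)$ because $\|t\|\in[0,1/2]$ lies in the range where sine is nonnegative. Applying the lower bound $\sin(\pi x)\geq 2x$ (valid for $x\in[0,1/2]$, which follows from concavity of sine on $[0,\pi]$), the denominator is at least $4\|c/q\|$, whence the sum is bounded by $2/(4\|c/q\|)=\tfrac{1}{2}\|c/q\|^{-1}\leq\|c/q\|^{-1}$.

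Finally, the triangle inequality gives $|\sum_{h=0}^{H-1}e_q(ch)|\leq H$ in all cases. Since the sum is simultaneously at most $H$ and at most $\|c/q\|^{-1}$, it is at most their minimum, which establishes the lemma (indeed with implied constant $1$). There is no substantive obstacle here; the only point requiring care is the identity $|e_q(c)-1|=2\sin(\pi\|c/q\|)$ together with the correct choice of elementary estimate, namely the \emph{lower} bound $\sin(\pi x)\geq 2x$ rather than the more familiar upper bound $\sin x\leq x$.
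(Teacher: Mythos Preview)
Your argument is correct and is precisely the standard proof of this elementary estimate. The paper itself does not supply a proof of this lemma at all, treating it as well known; your geometric-series computation with the inequality $\sin(\pi x)\ge 2x$ on $[0,1/2]$ is exactly what one would write to fill in the omitted details.
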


\begin{proof}[Proof of Proposition \ref{prop:num_solns_gen}]
We write $N$ as
\[ N=\sum_{\substack{x_i\in I_i\\ 1\leq i\leq r}}\sum_{\substack{\bv y\in\Z_q^s \\ \bv{y\cdot a_i}\equiv x_i}}1. \]
Write $L_i=|I_i|$ and let $I_i=[w_i-L_i/2,w_i+L_i/2]$. Note that for any fixed $1\leq i\leq r$ and $x_i\in I_i$,
\[ \sum_{\substack{0\leq\ell_i,\ell_i'\leq L_i\\ x_i=w_i+\ell_i-\ell_i'}}1\gg L_i. \]
Hence,
\begin{align*}
N &\ll\sum_{\substack{x_i\in\Z\\ 1\le i\le r}} \sum_{\substack{\bv y\in\Z_q^s \\ \bv{y\cdot a_i}\equiv x_i}} \sum_{\substack{ 0\le\ell_i,\ell_i'\le L_i \\ x_i=w_i+\ell_i-\ell_i'}} \frac1{L_1\cdots L_r} \\
&= \frac1{L_1\cdots L_r} \sum_{\substack{ 0\le\ell_i,\ell_i'\le L_i \\ 1\le i\le r }}  \sum_{\substack{\bv y\in\Z_q^s \\ \bv {y\cdot a_i}\equiv w_i+\ell_i-\ell_i'}} 1 \\
&=\frac1{q^rL_1\cdots L_r} \sum_{c_1,\dots,c_r} \sum_{\substack{ 0\le\ell_i,\ell_i'\le L_i \\ 1\le i\le r }}  \sum_{\bv y\in\Z_q^s } \prod_{i=1}^re_q(c_i(w_i+\ell_i-\ell_i'-\bv{a_i\cdot y}))\\
&=\frac1{q^rL_1\cdots L_r} \sum_{c_1,\dots,c_r} e_q(c_1w_1+\cdots+c_rw_r) \sum_{\bv y\in\Z_q^s} e_q(-\bv y\bv\cdot (c_1\bv a_1+\cdots+c_r\bv a_r)) \prod_{i=1}^r \left|\sum_{0\le\ell_i\le L_i} e_q(\ell_ic_i)\right|^2 \\
&=\frac{q^{s-r}}{L_1\cdots L_r}\sum_{\substack{c_1,\dots,c_r \\ c_1\bv{a_1}+\cdots+c_r\bv{a_r}\equiv\bv 0\pmod q }}e_q(c_1w_1+\cdots+c_rw_r)\prod_{i=1}^r \left|\sum_{0\le\ell_i\le L_i} e_q(\ell_ic_i)\right|^2 \\
&\ll \frac{q^{s-r}}{L_1\cdots L_r} \sum_{\substack{c_1,\dots,c_r \\ c_1\bv{a_1}+\cdots+c_r\bv{a_r}\equiv\bv 0\pmod q }} \prod_{i=1}^r \min\left\{L_i^2,\|c_i/q\|^{-2}\right\}.
\end{align*}
For each $1\leq i\leq r$, we divide the possible values of $c_i$ into $\lceil q/H_i\rceil$ intervals of the form $[j_iH_i,(j_i+1)H_i)$ ($0\leq j_i<q/H_i$). Then
\[ N \ll \frac{q^{s-r}}{L_1\cdots L_r} \sum_{\substack{0\leq j_i<q/H_i\\ 1\leq i\leq r}} \sum_{\substack{j_iH_i\leq c_i<(j_i+1)H_i\\ c_1\bv a_1+\dots+c_r\bv a_r\equiv\bv 0\pmod q}} \prod_{i=1}^r \min\left\{L_i^2,\|c_i/q\|^{-2}\right\}. \]
By hypothesis, for fixed $j_1,\cdots,j_r$, there exists at most one tuple $(c_1,\cdots,c_r)$ with $j_iH_i\leq c_i<(j_i+1)H_i$ and $c_1\bv a_1+\cdots+c_r\bv a_r\equiv\bv 0\pmod q$. Hence the inner sum is summing over at most one tuple $(c_1,\cdots,c_r)$, and thus
\begin{align*} 
N &\ll \frac{q^{s-r}}{L_1\cdots L_r} \sum_{\substack{0\leq j_i<q/H_i\\ 1\leq i\leq r}}\prod_{i=1}^r\min\left\{L_i^2,\|j_iH_i/q\|^{-2}\right\}  \\
&\ll\frac{q^{s-r}}{L_1\cdots L_r} \prod_{i=1}^r\left(\sum_{0\le j_i< q/H_i}\min\left\{L_i^2,\frac{q^2}{j_i^2H_i^2}\right\}\right).
\end{align*}
We now treat the inner sum over $j_i$. There are two cases. If $H_iL_i\geq q$, then
\[ \sum_{0\leq j_i<q/H_i}\min\left\{L_i^2,\frac{q^2}{j_i^2H_i^2}\right\}=L_i^2+\sum_{1\leq j_i<q/H_i}\frac{q^2}{j_i^2H_i^2}\ll L_i^2+\frac{q^2}{H_i^2}\ll L_i^2. \]
In the other case when $H_iL_i\leq q$, we have
\[ \sum_{0\leq j_i<q/H_i}\min\left\{L_i^2,\frac{q^2}{j_i^2H_i^2}\right\}=\sum_{0\leq j_i<q/H_iL_i}L_i^2+\sum_{q/H_iL_i\leq j_i<q/H_i}\frac{q^2}{j_i^2H_i^2}\ll \frac{qL_i}{H_i}. \]
Combining the two cases together, we conclude that
\[ N\ll \frac{q^{s-r}}{L_1\cdots L_r} \prod_{i=1}^r\left(L_i^2+\frac{qL_i}{H_i}\right) =q^{s-r} \prod_{i=1}^r\left(L_i+\frac{q}{H_i}\right). \]
\end{proof}


\section{Upper Bound for the $\ell^1$ Norm of the Fourier Coefficients}\label{sec:l1norm}

In order to bound $S(f,r)$ using the classical Fourier analytic method, it is necessary to obtain a good upper bound for the $\ell^1$ norm of the Fourier coefficients of the charactersitic function of a generalized arithmetic progression of rank $r$. Let $A\subset\Z_q$ be an arbitrary subset. We will use the following normalization for the Fourier coefficients of $A$:
\[ \hat{A}(b)=\sum_{a\in A}e_q(ab), \]
\[ \|\hat{A}\|_1=\sum_{b\in\Z_q}|\hat{A}(b)|, \]
where $e_q(x)=\exp(2\pi ix/q)$. The main result of this section is the following.

\begin{thm}\label{thm:l1norm}
Let $A\subset\Z_q$ be a proper GAP of rank $r$ of the form
\[ A=\{a_0+a_1h_1+\ldots+a_rh_r\pmod q:0\leq h_i<H_i\}, \]
where $2\leq H_1,\cdots,H_r\leq q$. Then,
\[ \|\hat{A}\|_1\ll q(\log H_1)\cdots (\log H_r), \]
where the implied constant depends only on $r$.
\end{thm}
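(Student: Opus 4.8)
The plan is to compute $\hat A(b)$ explicitly as a product of geometric-series exponential sums, reduce the bound on $\|\hat A\|_1$ to a counting problem, and then feed that counting problem into Proposition~\ref{prop:num_solns}. Writing $A=\{a_0+a_1h_1+\cdots+a_rh_r\}$, the phase $a_0b$ factors out with modulus $1$, so
\[ |\hat A(b)| = \prod_{i=1}^r\left|\sum_{h_i=0}^{H_i-1}e_q(a_ih_ib)\right| \ll \prod_{i=1}^r\min\left\{H_i,\left\|\frac{a_ib}{q}\right\|^{-1}\right\} \]
by Lemma~\ref{lem:simple_exp_sum}. So I must bound $\sum_{b\in\Z_q}\prod_{i=1}^r\min\{H_i,\|a_ib/q\|^{-1}\}$. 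The natural move is to linearize: introduce new variables $x_i\equiv a_ib\pmod q$ (so $x_i$ ranges over a coset, effectively over $\Z_q$ as $b$ does, but the point is that the summand depends only on $\|x_i/q\|$), and then dyadically decompose each factor. For each $i$ partition the $b$ for which $\|a_ib/q\|$ lies in a dyadic range $(2^{-k_i-1},2^{-k_i}]$; on such a range the $i$-th factor is $\ll 2^{k_i}$ and the constraint $\|a_ib/q\|\le 2^{-k_i}$ is exactly the constraint that $x_i=a_ib\bmod q$ lies in a short interval $I_i$ around $0$ of length $L_i\asymp q\cdot 2^{-k_i}$ (two intervals, near $0$ and near $q$; that's a harmless factor of $2^r$). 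Also $2^{k_i}\le H_i$ forces $k_i\le\log_2 H_i$, so there are only $O(\log H_i)$ dyadic scales in play for each coordinate — this is where the $\log H_i$ factors will come from.

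With this setup, $\|\hat A\|_1$ is bounded by a sum over $O(\prod_i\log H_i)$ choices of dyadic scales $(k_1,\dots,k_r)$, and for each fixed choice the contribution is $\ll\big(\prod_i 2^{k_i}\big)\cdot N$, where $N$ counts $b\in\Z_q$ (equivalently solutions $x_i\equiv a_ib\pmod q$ with $b=y\in\Z_q$) subject to $x_i\in I_i$, $|I_i|=L_i\asymp q2^{-k_i}$. This is precisely the quantity estimated by Proposition~\ref{prop:num_solns}, provided we can verify its hypothesis: that $a_1z_1+\cdots+a_rz_r\equiv 0\pmod q$ has no nontrivial solution with $|z_i|<H_i$. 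But a nontrivial such solution would give a nontrivial additive relation forcing a collision among the $a_0+\sum a_ih_i$, contradicting properness of $A$ (indeed properness, i.e. $|A|=\prod H_i$, is exactly the statement that the only way $\sum a_i(h_i-h_i')\equiv 0$ with $|h_i-h_i'|<H_i$ is the trivial one). So Proposition~\ref{prop:num_solns} applies and gives $N\ll q^{-(r-1)}\prod_i(L_i+q/H_i)$. Since $L_i\asymp q2^{-k_i}$ and $2^{k_i}\le H_i$ gives $q/H_i\le q2^{-k_i}\asymp L_i$, we get $N\ll q^{-(r-1)}\prod_i L_i\asymp q^{-(r-1)}\prod_i q2^{-k_i}=q\prod_i 2^{-k_i}$. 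Hence the contribution of each dyadic tuple is $\ll(\prod_i 2^{k_i})\cdot q\prod_i 2^{-k_i}=q$, and summing over the $O(\prod_i\log H_i)$ tuples yields $\|\hat A\|_1\ll q\prod_i\log H_i$, as desired.

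The main obstacle — and the step requiring the most care — is the bookkeeping at the boundary of the dyadic decomposition and the verification that properness of $A$ is exactly the nondegeneracy hypothesis of Proposition~\ref{prop:num_solns}. One has to handle separately the "large" range $\|a_ib/q\|$ very small (where the $\min$ is $H_i$, i.e. $k_i$ would exceed $\log_2 H_i$; this is the single scale $k_i=\lceil\log_2 H_i\rceil$ giving interval length $\asymp q/H_i$, which is already accounted for by the $q/H_i$ term in Proposition~\ref{prop:num_solns}) and the contributions should be organized so that no logarithmic factor is lost and no divergent geometric series appears. A minor subtlety is that some $a_i$ may fail to be invertible mod $q$ (when $q$ is composite), but this does not affect the argument: the linearization $x_i=a_ib\bmod q$ and the interval constraint make sense regardless, and Proposition~\ref{prop:num_solns} only requires the $a_i$ to be nonzero with the stated no-small-solutions property, which properness supplies. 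Once these points are pinned down, the estimate falls out cleanly from Proposition~\ref{prop:num_solns} and Lemma~\ref{lem:simple_exp_sum}.
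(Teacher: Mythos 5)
Your proof is correct and follows essentially the same route as the paper's: bound $|\hat{A}(b)|$ by $\prod_{i=1}^r\min\{H_i,\|a_ib/q\|^{-1}\}$ via Lemma \ref{lem:simple_exp_sum}, partition the $b$'s according to where $a_ib \bmod q$ lands, count each cell with Proposition \ref{prop:num_solns} (whose nondegeneracy hypothesis is exactly what properness supplies, as you verify), and sum. The only difference is cosmetic bookkeeping: you decompose dyadically into $O(\log H_i)$ scales per coordinate, each tuple contributing $O(q)$, whereas the paper splits $[0,q)$ into $H_i$ equal blocks of length $q/H_i$ and extracts the $\log H_i$ from the harmonic sum $\sum_{j}\min\{H_i,H_i/j\}\ll H_i\log H_i$.
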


Instead of proving Theorem \ref{thm:l1norm} directly, we state and prove the corresponding result for $A\subset\Z_q^s$. We define the Fourier transform of $A$ in the usual manner. For $\bv b\in (b_1,\ldots,b_s)\in\Z_q^s$, define
\[ \hat{A}(\bv b)=\hat{A}(b_1,\ldots,b_s)=\sum_{\bv a\in A}e_q(\bv a\bv\cdot \bv b). \]
Define the $\ell^1$ norm of $\hat{A}$ to be
\[ \|\hat{A}\|_1=\sum_{\bv b\in\Z_q^s}|\hat{A}(\bv b)|. \]

\begin{thm}\label{thm:l1norm_gen}
Let $A\subset\Z_q^s$ be a proper GAP of rank $r$ of the form
\[ A=\{\bv a_0+h_1\bv a_1+\ldots+h_r\bv a_r\pmod q:0\leq h_i<H_i\}, \]
where $\bv a_0,\bv a_1,\ldots,\bv a_r\in\Z_q^s$ and $2\leq H_1,\ldots,H_r\leq q$. Then,
\[ \|\hat{A}\|_1\ll q^s(\log H_1)\cdots (\log H_r), \]
where the implied constant depends only on $r,s$.
\end{thm}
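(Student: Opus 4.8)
The plan is to reduce to a pointwise bound on $|\hat A(\bv b)|$ obtained by factoring the Fourier coefficient, and then to run a dyadic decomposition of $\Z_q^s$ whose pieces are counted by Proposition \ref{prop:num_solns_gen}. Note first that Theorem \ref{thm:l1norm} is the special case $s=1$, so it suffices to prove Theorem \ref{thm:l1norm_gen}.

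\emph{Step 1: factorization.} Since $A$ is proper, the map $(h_1,\ldots,h_r)\mapsto\bv a_0+h_1\bv a_1+\cdots+h_r\bv a_r$ is a bijection from the box $\{(h_1,\ldots,h_r):0\le h_i<H_i\}$ onto $A$, and hence by additivity of $e_q$,
\[ \hat A(\bv b)=e_q(\bv a_0\cdot\bv b)\prod_{i=1}^r\sum_{h_i=0}^{H_i-1}e_q\big(h_i(\bv a_i\cdot\bv b)\big). \]
Applying Lemma \ref{lem:simple_exp_sum} to each of the $r$ inner sums gives
\[ |\hat A(\bv b)|\ll\prod_{i=1}^r\min\{H_i,\,\|(\bv a_i\cdot\bv b)/q\|^{-1}\}. \]
I also record that properness of $A$ is equivalent to the assertion that $z_1\bv a_1+\cdots+z_r\bv a_r\equiv\bv 0\pmod q$ has no nontrivial solution with $|z_i|<H_i$ for all $i$ (if two tuples in the box have the same image, take $z_i=h_i-h_i'$); in particular each $\bv a_i\ne\bv 0$ because $H_i\ge 2$, so the hypothesis of Proposition \ref{prop:num_solns_gen} is satisfied.

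\emph{Step 2: dyadic decomposition and counting.} Represent each residue $\bv a_i\cdot\bv b\pmod q$ by its representative in $(-q/2,q/2]$, and partition $\Z_q^s$ into classes indexed by tuples $(\sigma_1,k_1,\ldots,\sigma_r,k_r)$ with $\sigma_i\in\{+,-\}$ and $0\le k_i\le\log_2 H_i+O(1)$: a point $\bv b$ lies in the class if, for each $i$, the representative of $\bv a_i\cdot\bv b$ lies in an interval $I_i=I_i(\sigma_i,k_i)$ of length $\asymp 2^{k_i}q/H_i$, where the central level $k_i=0$ corresponds to $|\bv a_i\cdot\bv b|<q/H_i$. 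There are $O(\prod_i\log H_i)$ classes. On a fixed class, Step 1 gives $|\hat A(\bv b)|\ll\prod_i H_i/2^{k_i}$, while the number of $\bv b$ in the class is at most the quantity $N$ of Proposition \ref{prop:num_solns_gen} for these intervals, hence
\[ \#\{\bv b\text{ in the class}\}\ll q^{s-r}\prod_{i=1}^r\left(|I_i|+\frac{q}{H_i}\right)\ll q^{s-r}\prod_{i=1}^r\frac{2^{k_i}q}{H_i}. \]
Multiplying the two estimates, each class contributes $\ll q^{s-r}\cdot q^r=q^s$ to $\|\hat A\|_1$, and summing over the $O(\prod_i\log H_i)$ classes yields the claimed bound.

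I do not anticipate a genuine obstacle: once Proposition \ref{prop:num_solns_gen} is in hand this is a routine completion argument. The points needing care are bookkeeping — arranging the level sets to be genuine intervals (which is why I split off the sign $\sigma_i$ and handle the central level separately), verifying $|I_i|+q/H_i\ll 2^{k_i}q/H_i$ uniformly, including at the top level where the interval may be clipped by the range $(-q/2,q/2]$, and checking that index $i$ contributes only $\log_2 H_i+O(1)$ levels rather than $\log_2 q$, which is what produces $\log H_i$ (not $\log q$) in the final bound and matters when some $H_i$ is much smaller than $q$.
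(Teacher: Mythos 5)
Your argument is correct and follows essentially the same route as the paper: expand $\hat A(\bv b)$ as a product of geometric sums, bound each factor by Lemma \ref{lem:simple_exp_sum}, and control the level sets of the $\bv a_i\cdot\bv b$ via Proposition \ref{prop:num_solns_gen}, using properness to verify its hypothesis. The only difference is organizational — you partition each coordinate dyadically into $O(\log H_i)$ annuli with each class contributing $O(q^s)$, whereas the paper partitions $[0,q)$ into $H_i$ intervals of length $q/H_i$ and recovers the $\log H_i$ from the harmonic sum $\sum_{j_i}\min\{H_i,H_i/j_i\}\ll H_i\log H_i$ — and both bookkeepings yield the stated bound.
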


\begin{proof}
Since $A$ is proper, $H_i\leq q$. We have, by Lemma \ref{lem:simple_exp_sum},
\begin{align*} 
\|\hat{A}\|_1 &=\sum_{\bv b\in\Z_q^s}|\hat{A}(b)|=\sum_{\bv b\in\Z_q^s}\left|\sum_{h_1=0}^{H_1-1}e_q(h_1(\bv b\bv\cdot \bv a_1))\right|\cdots\left|\sum_{h_r=0}^{H_r-1}e_q(h_r(\bv b\bv\cdot \bv a_r))\right| \\
&\ll\sum_{\bv b\in\Z_q^s}\min\left\{H_1,\left\|\frac{\bv b\bv\cdot\bv a_1}{q}\right\|^{-1}\right\}\cdots \left\{H_r,\left\|\frac{\bv b\bv\cdot\bv a_r}{q}\right\|^{-1}\right\}.
\end{align*}
For each $1\leq i\leq r$, we divide $[0,q)$ into $H_i$ intervals of the form $[j_iq/H_i,(j_i+1)q/H_i)$ ($0\leq j_i<H_i$). We have
\[ \|\hat{A}\|_1\ll\sum_{\substack{0\leq j_i<H_i\\ 1\leq i\leq r}}\sum_{\substack{\bv b\in\Z_q^s\\ j_iq/H_i\leq\bv b\bv\cdot\bv a_i<(j_i+1)q/H_i}}\prod_{i=1}^r\min\left\{H_i,\left\|\frac{\bv b\bv\cdot\bv a_i}{q}\right\|^{-1}\right\}. \]
Since $A$ is proper, the equation $z_1\bv a_1+\cdots+z_r\bv a_r\equiv 0\pmod q$ has no nontrivial solutions $z_i\in\Z$ with $|z_i|<H_i$. For fixed $j_1,\cdots,j_r$, we apply Proposition \ref{prop:num_solns_gen} with $I_i=[j_iq/H_i,(j_i+1)q/H_i)$ ($1\leq i\leq r$) to conclude that the number of vectors $\bv b\in\Z_q^s$ with $\bv b\bv\cdot\bv a_i\in I_i$ is
\[ \ll q^{s-r}\prod_{i=1}^r\frac{q}{H_i}=\frac{q^s}{H_1\cdots H_r}. \]
Note also that for those vectors $\bv b\in\Z_q^s$ with $\bv{b\cdot a_i}\in I_i$, we have
\[ \left\|\frac{\bv{b\cdot a_i}}{q}\right\|^{-1}\ll\frac{H_i}{j_i}. \]
Hence
\[ \|\hat{A}\|_1\ll\sum_{\substack{0\leq j_i<H_i\\ 1\leq i\leq r}}\frac{q^s}{H_1\cdots H_r}\prod_{i=1}^r\min\left\{H_i,\frac{H_i}{j_i}\right\}=\frac{q^s}{H_1\cdots H_r}\prod_{i=1}^r\sum_{0\leq j_i<H_i}\min\left\{H_i,\frac{H_i}{j_i}\right\}. \]
The inner sum over $j_i$ above is easily evaluated to be
\[ H_i+\sum_{j_i=1}^{H_i-1}\frac{H_i}{j}\ll H_i\log H_i. \]
It then follows that
\[ \|\hat{A}\|_1\ll\frac{q^s}{H_1\cdots H_r}\prod_{i=1}^r(H_i\log H_i)=q^s(\log H_1)\cdots (\log H_r). \]
\end{proof}

Thus we have obtained upper bounds for $\ell^1$ norm of the Fourier coefficients of GAPs in $\Z_q$. By sending $q\rightarrow\infty$ in Theorem \ref{thm:l1norm_gen}, we immediately get an analogous result for GAPs in $\Z$. For $A\subset\Z$ and $x\in\R$, define
\[ \hat{A}(x)=\sum_{a\in A}e(ax), \]
where $e(y)=\exp(2\pi iy)$.

\begin{thm}\label{thm:l1norm_Z}
Let $A\subset\Z^s$ be a proper GAP of rank $r$ of the form
\[ A=\{\bv a_0+h_1\bv a_1+\ldots+h_r\bv a_r:0\leq h_i<H_i\}, \]
where $\bv a_0,\bv a_1,\ldots,\bv a_r\in\Z^s$. Then,
\[ \|\hat{A}\|_1=\int_0^1|\hat{A}(x)|dx\ll (\log H_1)\cdots (\log H_r), \]
where the implied constant depends only on $r,s$.
\end{thm}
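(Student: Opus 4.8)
The plan is to deduce Theorem~\ref{thm:l1norm_Z} from Theorem~\ref{thm:l1norm_gen} by a straightforward limiting argument, sending the modulus $q$ to infinity. Write the GAP as $A=\{\bv a_0+h_1\bv a_1+\cdots+h_r\bv a_r:0\le h_i<H_i\}\subset\Z^s$, and for $\bv x\in\R^s$ put $\hat A(\bv x)=\sum_{\bv a\in A}e(\bv a\bv\cdot\bv x)$; this is a trigonometric polynomial, hence a continuous $\Z^s$-periodic function, and the quantity to be bounded is $\|\hat A\|_1=\int_{[0,1]^s}|\hat A(\bv x)|\,d\bv x$.

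First I would check that for all sufficiently large $q$ the reduction $A\bmod q\subset\Z_q^s$ is again a proper GAP with the same data $H_1,\dots,H_r$. Indeed, properness of $A$ in $\Z^s$ means precisely that $z_1\bv a_1+\cdots+z_r\bv a_r\ne\bv 0$ in $\Z^s$ for every nonzero integer tuple with $|z_i|<H_i$; there are only finitely many such tuples, so once $q$ exceeds the maximum absolute value of all coordinates of all vectors $z_1\bv a_1+\cdots+z_r\bv a_r$ arising this way (and once $q\ge\max_i H_i$), none of these relations survives modulo $q$, so $A\bmod q$ is a proper GAP of rank $r$ in $\Z_q^s$. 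Theorem~\ref{thm:l1norm_gen} then applies and yields $\|\widehat{A\bmod q}\|_1\ll q^s(\log H_1)\cdots(\log H_r)$, with an implied constant depending only on $r$ and $s$ and crucially uniform in $q$.

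Next I would rewrite $\|\widehat{A\bmod q}\|_1$ in terms of $\hat A$. Since $\widehat{A\bmod q}(\bv b)=\sum_{\bv a\in A}e_q(\bv a\bv\cdot\bv b)=\hat A(\bv b/q)$, we have
\[ \frac1{q^s}\|\widehat{A\bmod q}\|_1=\frac1{q^s}\sum_{\bv b\in\Z_q^s}\bigl|\hat A(\bv b/q)\bigr|, \]
which is exactly the Riemann sum for $\int_{[0,1]^s}|\hat A(\bv x)|\,d\bv x$ associated to the partition of $[0,1]^s$ into $q^s$ subcubes of side $1/q$. Because $|\hat A|$ is continuous, hence uniformly continuous, on the compact torus, these Riemann sums converge to $\|\hat A\|_1$ as $q\to\infty$. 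Combining this with the uniform bound of the previous step,
\[ \|\hat A\|_1=\lim_{q\to\infty}\frac1{q^s}\|\widehat{A\bmod q}\|_1\le\limsup_{q\to\infty}\frac1{q^s}\cdot C_{r,s}\,q^s(\log H_1)\cdots(\log H_r)=C_{r,s}(\log H_1)\cdots(\log H_r), \]
which is the asserted estimate.

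There is no genuine obstacle here; the argument is purely formal once Theorem~\ref{thm:l1norm_gen} is available. The only two points that deserve a word are (i) that the implied constant in Theorem~\ref{thm:l1norm_gen} is independent of $q$, so it passes through the limit unharmed, and (ii) that properness is an ``open'' condition preserved under reduction modulo every large $q$ — both immediate. Alternatively, one can avoid the limit entirely by keeping $q$ fixed and large, noting that the Riemann sum differs from the integral by $o_q(q^s)$, which is absorbed into the error term; but the limiting formulation above is the cleanest.
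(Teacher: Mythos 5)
Your proof is correct and is exactly the paper's argument: the paper derives Theorem~\ref{thm:l1norm_Z} by ``sending $q\to\infty$'' in Theorem~\ref{thm:l1norm_gen} without further detail, and you have simply filled in the two points that need checking (properness of $A\bmod q$ for large $q$, and convergence of the Riemann sums $q^{-s}\sum_{\bv b}|\hat A(\bv b/q)|$ to the integral over $[0,1]^s$).
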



\section{Bounds on Character Sums and Exponential Sums}\label{sec:gen_fr}

In this section, we complete the proof of Theorem \ref{main_thm}, Theorem \ref{thm:exp}, and Theorem \ref{main_gen_thm}, by the classical Fourier analytic techniques. This method is well known. See, for example, Chapter 23 in \cite{davenport2000}. 

\begin{lem}\label{lem:gen_fr}
Let $A\subset\Z_q^s$. Let $f:\Z_q^s\rightarrow\C$ be a function with $|f|\leq 1$. Then,
\[ \sum_{a\in A}f(\bv a)\leq\frac{\|\hat{f}\|_{\infty}\|\hat{A}\|_1}{q^s}. \]
\end{lem}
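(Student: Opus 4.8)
The plan is to expand $f$ in its Fourier basis on $\Z_q^s$ and interchange the order of summation. Write the Fourier inversion formula
\[ f(\bv a) = \frac{1}{q^s}\sum_{\bv b\in\Z_q^s}\hat{f}(\bv b)\,e_q(-\bv a\bv\cdot\bv b), \]
where $\hat{f}(\bv b)=\sum_{\bv x\in\Z_q^s}f(\bv x)e_q(\bv x\bv\cdot\bv b)$. (One should double-check that this normalization is consistent with the one used for $\hat{A}$ in the excerpt; the sign in the exponential is a matter of convention and can be absorbed by replacing $\bv b$ with $-\bv b$ in the sum over $\Z_q^s$, which changes nothing.)

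Next I would substitute this into $\sum_{\bv a\in A}f(\bv a)$ and swap the two sums:
\[ \sum_{\bv a\in A}f(\bv a) = \frac{1}{q^s}\sum_{\bv b\in\Z_q^s}\hat{f}(\bv b)\sum_{\bv a\in A}e_q(-\bv a\bv\cdot\bv b) = \frac{1}{q^s}\sum_{\bv b\in\Z_q^s}\hat{f}(\bv b)\,\overline{\hat{A}(\bv b)}, \]
recognizing the inner sum as (a conjugate of) $\hat{A}(\bv b)$. Then apply the triangle inequality and bound $|\hat{f}(\bv b)|\leq\|\hat{f}\|_\infty$ uniformly, pulling it out of the sum:
\[ \left|\sum_{\bv a\in A}f(\bv a)\right| \leq \frac{1}{q^s}\sum_{\bv b\in\Z_q^s}|\hat{f}(\bv b)|\,|\hat{A}(\bv b)| \leq \frac{\|\hat{f}\|_\infty}{q^s}\sum_{\bv b\in\Z_q^s}|\hat{A}(\bv b)| = \frac{\|\hat{f}\|_\infty\,\|\hat{A}\|_1}{q^s}. \]
This is exactly the claimed bound.

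There is essentially no obstacle here — the lemma is a one-line consequence of Fourier inversion, Fubini (for finite sums), and the triangle inequality, which is why the excerpt flags the whole section as "standard" and "included for the sake of completeness." The only thing requiring a moment of care is matching conventions: making sure the definition of $\hat{f}$ chosen is compatible with the inversion formula being used, and that the complex-conjugate bookkeeping between $\hat{A}(\bv b)$ and $\sum_{\bv a\in A}e_q(-\bv a\bv\cdot\bv b)$ is handled correctly (since $|\hat{A}(\bv b)|=|\overline{\hat{A}(\bv b)}|=|\hat{A}(-\bv b)|$ and summing $|\hat{A}(-\bv b)|$ over all $\bv b\in\Z_q^s$ gives the same $\|\hat{A}\|_1$, this is harmless). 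The hypothesis $|f|\leq 1$ is not even needed for this particular inequality as stated — it will be used later when applying the lemma, to control $\|\hat{f}\|_\infty$ trivially by $q^s$ in cases where no better bound is available, but for Theorem~\ref{main_thm} one instead inputs the Gauss-sum bound $\|\hat{\chi}\|_\infty\ll q^{1/2}$ and for Theorem~\ref{thm:exp} one inputs Weil's bound.
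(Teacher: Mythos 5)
Your proposal is correct and is essentially the paper's own argument: the paper simply invokes the Plancherel theorem to write $\sum_{\bv a\in A}f(\bv a)=\frac{1}{q^s}\sum_{\bv b}\hat f(\bv b)\hat A(\bv b)$ and then applies H\"older, which is exactly the Fourier-inversion-and-swap computation you carried out explicitly (the conjugate bookkeeping being immaterial after taking absolute values). Your side remarks about conventions and about $|f|\le 1$ being unnecessary for this inequality are accurate but do not change the substance.
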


\begin{proof}
This is a simple consequence of the Plancherel Theorem. We abuse notation to write $A$ for the characteristic function of $A$. Then,
\[ \sum_{\bv a\in A}f(\bv a)=\sum_{\bv x\in\Z_q^s}f(\bv x)A(\bv x)=\frac{1}{q^s}\sum_{\bv b\in\Z_q^s}\hat{f}(\bv b)\hat{A}(\bv b)\leq \frac{\|\hat{f}\|_{\infty}\|\hat{A}\|_1}{q^s}. \]
\end{proof}

To prove Theorem \ref{main_thm} and \ref{thm:exp}, we simply need the $s=1$ version of the above lemma. When $s=1$, for a generic function $f$, one expects square root cancellation in $\hat{f}(b)$ and hence the bound $\|\hat{f}\|_{\infty}\ll\sqrt{q}$. This is indeed the case in our situations, when $f=\chi$ is a character, or when $f$ is the exponential of a polynomial.

\begin{proof}[Proof of Theorem \ref{main_thm}]
We use Lemma \ref{lem:gen_fr} with $f=\chi$ and Theorem \ref{thm:l1norm}. For any $b\in\Z_q$, we have
\[ \hat{\chi}(b)=\sum_{a\in\Z_q}\chi(a)e_p(ab)=\tau(\chi)\bar{\chi}(b). \]
For the last equality, see Chapter 9 of \cite{davenport2000}. The Gauss sum $\tau(\chi)$ satisfies $|\tau(\chi)|=\sqrt{p}$, and thus $\|\hat{\chi}\|_{\infty}=\sqrt{p}$. This completes the proof. 
\end{proof}

\begin{proof}[Proof of Theorem \ref{thm:exp}]
Suppose that $q$ is prime. Let $h(x)=c_dx^d+\ldots+c_1x+c_0\in\Z[x]$ be a polynomial of degree $2\leq d<q$ with $q\nmid c_d$. We use Lemma \ref{lem:gen_fr} with $f(x)=e_q(h(x))$ and Theorem \ref{thm:l1norm}. For any $b\in\Z_q$, we have
\[ \hat{f}(b)=\sum_ae_q(h(a)+ba)\ll d\sqrt{q} \]
by the classical Weil's bound. Hence $\|\hat{f}\|_{\infty}\ll d\sqrt{q}$. This completes the proof.
\end{proof}

Finally, we treat the multilinear character sum in Theorem \ref{main_gen_thm}.

\begin{proof}[Proof of Theorem \ref{main_gen_thm}]
We apply Lemma \ref{lem:gen_fr} with $f(a_1,\ldots,a_s)=\chi(a_1\cdots a_s)$ and use Theorem \ref{thm:l1norm_gen}. For any $\bv b=(b_1,\ldots,b_s)\in\Z_q^s$, we have
\begin{align*} 
\hat{f}(\bv b)&=\sum_{\bv a\in\Z_q^s}\chi(a_1\cdots a_s)e_q(\bv a\bv\cdot\bv b)=\left(\sum_{a_1\in\Z_q}\chi(a_1)e_q(a_1b_1)\right)\cdots\left(\sum_{a_s\in\Z_q}\chi(a_s)e_q(a_sb_s)\right) \\
&=\tau(\chi)^s\bar{\chi}(b_1)\cdots\bar{\chi}(b_s). 
\end{align*}
Hence, $\|\hat{f}\|_{\infty}=|\tau(\chi)|^s=q^{s/2}$. This completes the proof.
\end{proof}

It is natural to believe that Theorem \ref{main_thm} (and also Theorem \ref{main_gen_thm}) holds for all nontrivial characters $\chi$ (mod $q$), as is the case for the P\'{o}lya-Vinogradov inequality. For non-primitive characters $\chi$, one might argue as follows. Suppose that $\chi$ (mod $q$) is induced by the primitive character $\chi_1$ (mod $q_1$), and write $q=q_1r$ for some positive integer $r$. Then
\[ \sum_{n\in A}\chi(n)=\sum_{\substack{n\in A\\ (n,r)=1}}\chi_1(n)=\sum_{n\in A}\sum_{d\mid (n,r)}\mu(d)\chi_1(n)=\sum_{d\mid n}\mu(d)\sum_{n\in A_d}\chi_1(n), \]
where $A_d=\{n\in A:d\mid n\}$. In order to bound the inner sum over $A_d$, it is necessary to understand the structure of $A_d$. In the case when $A$ is an interval, it is obvious that $A_d$ is a dilate of an interval. However, when $A$ is a GAP of rank $r>1$, the structure of $A_d$ becomes mysterious. The question of bounding $S(\chi,r)$ for non-primitive characters $\chi$ still seems to be open.


\section{Open questions}\label{sec:conj}

Theorem \ref{thm:l1norm} shows that generalized arithmetic progressions have small $\ell^1$ norm of Fourier coefficients. The Littlewood-Gowers problem (the classical Littlewood problem in finite fields) asks what is the smallest possible $\|\hat{A}\|_1$ for all $A\subset\F_p$. The current best known result in this direction is by Sanders \cite{sanders2007}. See also \cite{green2006}. Using our notations from Section \ref{sec:l1norm}, we state their result as follows.

\begin{thm}\label{thm:littlewood}
Let $A\subset\F_p$. Then,
\[ \|\hat{A}\|_1\gg p(\log p)^{\frac{1}{2}}(\log\log p)^{-\frac{3}{2}}. \]
\end{thm}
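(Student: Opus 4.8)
The plan is to follow Sanders's spectral--structural method, which in turn grows out of his work with Green on the quantitative idempotent theorem. Fix $A\subset\F_p$ and suppose $\|\hat A\|_1=Mp$ with $M$ as small as possible; from Plancherel together with $\|\hat A\|_\infty=|A|$ one gets the trivial bound $M\ge 1$, and the goal is to bootstrap this to $M\gg(\log p)^{1/2}(\log\log p)^{-3/2}$. (The statement is only substantive when $|A|$ is of order $p$ --- for bounded $|A|$ an interval already achieves $\|\hat A\|_1\asymp p$ --- so I keep $|A|\gg p$ in mind; the uniform form carries $\log|A|$ in place of $\log p$.)

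First I would control the large spectrum: for $\rho\in(0,1)$ put $\mathrm{Spec}_\rho=\{b\in\Z_p:|\hat A(b)|\ge\rho|A|\}$, so that $|\mathrm{Spec}_\rho|\ll M/\rho$, and invoke Chang's theorem to place $\mathrm{Spec}_\rho$ inside a symmetric generalized arithmetic progression (a Bohr set) of rank $\ll\rho^{-2}\log(p/|A|)=O(\rho^{-2})$ and controlled volume. The next step turns this spectral information into an approximation of $1_A$ itself: using the boundedness of $\|\hat A\|_1$ one writes $1_A$ as a bounded combination of normalised indicators of Bohr sets, up to an $L^2$-error which is a small multiple of $|A|^{1/2}$ --- this is the quantitative-idempotent-theorem input, and each Bohr set in it contributes to $\|\hat A\|_1$ only a fixed power of its rank.

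Then I would iterate. Peeling off the dominant structured piece strictly decreases a complexity/energy functional of the remaining signed set, so the process terminates; since everything lives in $\F_p$, at most $\asymp\log p$ rounds are possible. Accumulating the individual $\ell^1$ contributions over the iteration, and at each stage choosing the threshold $\rho$ optimally against the number of rounds still to come, is exactly what manufactures the exponent $\tfrac12$ and the secondary $(\log\log p)^{-3/2}$ loss.

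The hard part will be quantitative efficiency. Chang's theorem and the Freiman-type ingredient of the idempotent step both lose exponential factors in their parameters; used bluntly they yield only $M\gg(\log\log p)^{c}$, so obtaining a true power of $\log p$ requires running the structure theory at the sharpest available scale and, above all, arranging that the error terms do not compound faster than the main terms across all $\log p$ iterations --- this error bookkeeping, rather than any single estimate, is where the difficulty concentrates. A further, genuinely necessary, nuisance is to make every constant uniform in $|A|$ so the bound degrades gracefully as $|A|$ moves away from $p$.
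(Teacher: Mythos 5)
The first thing to say is that the paper does not prove this statement at all: Theorem \ref{thm:littlewood} is quoted from Sanders \cite{sanders2007} (building on Green and Konyagin \cite{green2006}) and is used only as motivation for the open questions in Section \ref{sec:conj}. So there is no in-paper argument to compare yours against; the only fair benchmark is Sanders's own proof.

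Measured against that benchmark, what you have written is a strategy outline rather than a proof, and the gap is exactly where you yourself locate it. Your opening moves --- the bound $|\mathrm{Spec}_\rho|\ll M/\rho$, Chang's theorem placing the large spectrum in a low-rank Bohr set, and an approximation of $1_A$ by a bounded combination of Bohr-set indicators --- are genuine ingredients of the Green--Konyagin--Sanders approach. But the entire quantitative content of the theorem lives in the part you defer: you concede that applying Chang's theorem and the Freiman/idempotent input ``bluntly'' yields only $M\gg(\log\log p)^{c}$, and that reaching $(\log p)^{1/2}(\log\log p)^{-3/2}$ requires an error bookkeeping across $\asymp\log p$ iterations that you do not perform. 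No choice of the threshold $\rho$ is specified, no complexity functional is defined, and no inequality is proved showing that the errors do not compound; ``the process terminates'' and ``manufactures the exponent $\tfrac12$'' are assertions of the conclusion, not derivations. A secondary issue: as stated (both by you and, to be fair, by the paper), the bound is false for very small sets --- a singleton has $\|\hat A\|_1=p$ exactly --- so the density hypothesis you mention only in passing must actually be built into the statement, as it is in \cite{sanders2007} and \cite{green2006}. To turn this into a proof you would essentially have to reproduce Sanders's paper; as it stands, nothing beyond the trivial bound $\|\hat A\|_1\ge p$ has been established.
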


The conjectural lower bound for $\|\hat{A}\|_1$ is $p\log p$ when $A$ is an interval. It is natural to seek for an inverse result for Theorem \ref{thm:littlewood}. More precisely, if we know that $\|\hat{A}\|_1$ is bounded by $p(\log p)^r$ for some $r$, is it true that $A$ is very close to a proper GAP of rank $r$? It is not clear at present what the correct formulation for this inverse result should be, so we will not make a rigorous conjecture here. This inverse problem was also suggested to the author by Ben Green.

Another open question is that whether the bound in Theorem \ref{main_thm} can be improved assuming GRH, as is done in the case when $A$ is an interval. Moreover, it would also be nice to be able to construct a family of characters $\chi$ (mod $q$) so that $S(\chi,r)$ is large, as is done by Paley \cite{paley1932} in the case $r=1$. See also \cite{goldmakher2011odd} and \cite{goldmakher2012even} for an improvement of Paley's result.


\begin{thebibliography}{10}

\bibitem{bourgain2010exp}
J.~Bourgain.
\newblock Estimates on polynomial exponential sums.
\newblock {\em Israel Journal of Mathematics}, 176(1):221--240, 2010.

\bibitem{bourgain2010}
J.~Bourgain and M.C. Chang.
\newblock On a multilinear character sum of {Burgess}.
\newblock {\em Comptes Rendus Mathematique}, 348(3-4):115--120, 2010.

\bibitem{bourgain2012}
J.~Bourgain, M.~Garaev, S.~Konyagin, and I.~Shparlinski.
\newblock On congruences with products of variables from short intervals and
  applications.
\newblock {\em Arxiv preprint arXiv:1203.0017}, 2012.

\bibitem{chang2008}
M.C. Chang.
\newblock On a question of {Davenport} and {Lewis} and new character sum bounds
  in finite fields.
\newblock {\em Duke Mathematical Journal}, 145(3):409--442, 2008.

\bibitem{davenport2000}
H.~Davenport and H.L. Montgomery.
\newblock {\em Multiplicative number theory}, volume~74.
\newblock Springer Verlag, 2000.

\bibitem{gallagher2010}
P.X. Gallagher and H.L. Montgomery.
\newblock A note on burgess¡Çs estimate.
\newblock {\em Mathematical Notes}, 88(3):321--329, 2010.

\bibitem{goldmakher2009}
L.~Goldmakher.
\newblock Multiplicative mimicry and improvements of the {P{\'o}lya-Vinogradov}
  inequality.
\newblock {\em Arxiv preprint arXiv:0911.5547}, 2009.

\bibitem{goldmakher2011odd}
L.~Goldmakher and Y.~Lamzouri.
\newblock Lower bounds on odd order character sums.
\newblock {\em Arxiv preprint arXiv:1109.1348}, 2011.

\bibitem{granville2007}
A.~Granville and K.~Soundararajan.
\newblock Large character sums: pretentious characters and the
  {P{\'o}lya-Vinogradov} theorem.
\newblock {\em Journal of the American Mathematical Society}, 20(2):357--384,
  2007.

\bibitem{goldmakher2012even}
L.~Goldmakher and Y.~Lamzouri.
\newblock Large even order character sums.
\newblock {\em Arxiv preprint arXiv:1205.3525}, 2012.

\bibitem{green2006}
B.~Green and S.~Konyagin.
\newblock On the {Littlewood} problem modulo a prime.
\newblock {\em Canadian J. Math}, 61:141--164, 2009.

\bibitem{konyagin2010}
S.V. Konyagin.
\newblock Estimates of character sums in finite fields.
\newblock {\em Mathematical Notes}, 88(3):503--515, 2010.

\bibitem{paley1932}
R.~Paley.
\newblock A theorem on characters.
\newblock {\em Journal of the London Mathematical Society}, 1(1):28--32, 1932.

\bibitem{sanders2007}
T.~Sanders.
\newblock The {Littlewood-Gowers} problem.
\newblock {\em Journal d'Analyse Math{\'e}matique}, 101(1):123--162, 2007.

\end{thebibliography}
\end{document}